\journal{}
\newtheorem{tm}{Theorem}[section]
\newtheorem{rk}{Remark}[section]
\newtheorem{df}{Definition}[section]
\newtheorem{prop}{Proposition}[section]
\newtheorem{lm}{Lemma}[section]
\newtheorem{cor}{Corollary}[section]
\newtheorem{ex}{Example}[section]
\newcommand{\E}{\mathbb E}
\newcommand{\N}{\mathbb N}
\newcommand{\R}{\mathbb R}
\newcommand{\T}{\mathbb T}
\renewcommand{\>}{\rangle}
\begin{document}

\begin{frontmatter}
\title{What is a stochastic Hamiltonian process on finite graph? \\ An optimal transport answer}
\tnotetext[mytitlenote]{}

\author[cas]{Jianbo Cui}
\ead{jcui82@gatech.edu}

\author[cas]{Shu Liu}
\ead{sliu459@math.gatech.edu}

\author[cas]{Haomin Zhou}
\ead{hmzhou@math.gatech.edu}

\address[cas]{School of Mathematics, Georgia Tech, Atlanta, GA 30332, USA}
\begin{abstract}
We present a definition of  stochastic Hamiltonian process on finite graph via its corresponding density dynamics in Wasserstein manifold. We demonstrate the existence of stochastic Hamiltonian process in many classical discrete  problems, such as the optimal transport problem, Schr\"odinger equation and Schr\"odinger bridge problem (SBP). The stationary and periodic properties of Hamiltonian processes are also investigated in the framework of SBP. 
\end{abstract}

\begin{keyword}
Wasserstein-Hamiltonian flow \sep Sch\"ordinger bridge problem \sep Optimal transport \sep time-inhomogeneous Markov process
\MSC[2010] 
35R02 \sep 
60J20 \sep 
53C22 \sep
 49Q22
\end{keyword}

\end{frontmatter}


\section{Introduction}
Hamiltonian systems, including both ordinary or partial differential equations (ODEs or PDEs respectively), are ubiquitous in applications. Their mathematical studies have a long and rich history (see e.g., \cite{Rab78,Arn89,MW89}). Traditionally, the ambient space on which to define a Hamiltonian system is continuous, such as Euclidean space $\R^n$ or smooth manifolds like torus $\T^2$. What is a Hamiltonian process if the underlying space becomes discrete, such as a finite graph? This is the question that we would like to explore within the framework of optimal transport (OT) in this study.

Our motivation to consider this question is 3-fold. Curiosity is at the first place. Secondly, the notion of gradient flow on graph has been investigated extensively using OT theory (see e.g. \cite{Mas11, CHLZ12} and references therein). For example, an irreducible and reversible continuous time Markov chain on graph can be viewed as the gradient flow of entropy with respect to the discrete Wasserstein metric \cite{Mas11}. Naturally, we are inspired to ask whether the concept of Hamiltonian process on graph exists or not. To the best of our knowledge, the Hamiltonian mechanics on graph has not been explored yet. Finally and most importantly, recent developments in several practical problems, which can be defined in both continuous and discrete spaces, demonstrate Hamiltonian principles. They are (i) 
the OT problem (see e.g. \cite{Vil09}),
\begin{equation}\label{min-e}\begin{split}
{W_2^2(\rho_0,\rho_1)}&=\inf_{v}\{\int_0^1\E[|\dot X_t|^2]dt\ : \, 
\dot X_t=v(t,X_t), X_0 \thicksim \rho^0, X_1\thicksim \rho^1\},
\end{split}\end{equation}
(ii) the SBP (see e.g. \cite{Sch31}),
\begin{align}\label{Sch-bri-pro}
\inf_{v}\Big\{\int_0^1\frac 12 \mathbb E[|v(t,X_t)|^2]dt: \dot X_t=v(t,X_t)+\sqrt{\hbar} \dot B_t, \;  X_0\sim \rho^0,\; X_1\sim \rho^1\Big\}
\end{align}
 and (iii)
 the Schr\"odinger equation (see e.g. \cite{Nelson19661079,Mad27,CLZ19}),
\begin{align} \label{Sch-pro}
\inf_{v}\Big\{\int_0^T\frac 12 \mathbb E[|\dot X_t|^2]dt: \dot X_t=v(t,X_t)+\sqrt{\hbar} \dot B_t, \;  X_0\sim \rho^0,\; X_1\sim \rho^1\Big\}.
\end{align} The above formulations are presented in Euclidean space where $v\in \mathbb R^d$ can be any smooth vector field, $X_t$ is a stochastic process with prescribed probability densities $\rho^0$ and $\rho^1$ at time $0$ and $1$ respectively, $B_t$ is the standard Brownian motion and $\hbar>0$ is a constant. 

A common property shared by these problems is that their critical points obey the Hamiltonian principle. For instance, the minimizer of OT problem \eqref{min-e} satisfies a Hamiltonian PDE with the Hamiltonian $H(x,v,t)=\frac 12 |v|^2$ (see e.g. \cite{BB00}). The minimizer of SBP \eqref{Sch-bri-pro} is the solution of a Hamiltonian PDE with $H(x,v)=\frac 12|v|^2-\frac 18\hbar \frac {\delta}{\delta \rho}I(\rho)(t,x)$  where the Fisher information $I(\rho)=\int_{\mathbb R^d}|\nabla \log\rho(x)|^2\rho(x)dx$ (see e.g. \cite{Pav03,Leo14}). Needless to say, the critical point of \eqref{Sch-pro} satisfies the the Schr\"odinger equation, which is a well-known Hamiltonian system. The problems stated in \eqref{min-e}, \eqref{Sch-bri-pro} and \eqref{Sch-pro} can be posed, with nominal changes, on a graph, and the density functions of their critical points have been studied on the Wasserstein manifold (see  \cite{gangbo2019geodesics}, \cite{Leo16, chow20discrete}, \cite{CLZ19}) showing that they satisfy Hamiltonian ODEs. Based on those results, we investigate the properties of stochastic process $X(t)$ and provide an answer to the question in the title of the paper within the OT framework. 

 Defining Hamiltonian process on graph must face several intrinsic difficulties. The most obvious one is that $X(t)$ is a stochastic process jumping from node to node on the graph, while its continuous space counterpart trajectory is a spatial-temporal continuous function. Another challenge is about characteristic line. In fact, it is not clear how to define characteristic on graph.  
Furthermore, there is no reported result about examining whether a stochastic process, such as discrete OT and SBP, can preserve Hamiltonian along its trajectory, just like a classical Hamiltonian system does in continuous space.   

To fill the gaps on finite graph, our idea is lifting the process on graph into a motion on its density manifold. {To be more precise, we define the Hamiltonian process by a random process whose density and generators of instantaneous transition rate matrix form a Wasserstein Hamiltonian flow on the cotangent bundle of density manifold.} Meanwhile, we show that such defined Hamiltonian processes exist in numerous practical problems, such as the discrete OT problem and SBP. Two important classes of Hamiltonian processes, namely the stationary Hamiltonian process and the periodic Hamiltonian process, are also discussed via the framework of SBP. They correspond to the invariant measure and the periodic solution of the Hamiltonian flow on the density space.  {We would like to mention that
the Wasserstein Hamiltonian flow is firstly studied by Nelson's mechanics (see e.g. \cite{Nelson19661079,Carlen1984293}). It is also pointed out that the Hamiltonian flows in density space are probability transition equations of classical Hamiltonian ODEs (see \cite{Vil09,CLZ20} and references therein).}

{There are several works with titles related to Hamiltonian systems on graphs, like the port-Hamiltonian system on graphs (see e.g. \cite{van06,vanM13} and the references therein).} Our current work is different from them.
The port-Hamiltonian systems are the generalization of classical Hamiltonian system which describes the dynamics in interaction with control units, energy dissipating or energy storing units. The graph structure is used to characterize the interaction of the systems with ports, and their underlying phase variables are still in continuous spaces, like $\mathbb R^d$ or smooth manifold. 

This paper is organized as follows. In section 2, we use the discrete optimal transport problem as the motivation of studying the Hamiltonian process on finite graph. In section 3, we present the definition and several properties of the Hamiltonian process on graph. In section 4, we study several different Hamiltonian dynamics derived from the discrete SBP from two different perspectives. We also discuss the existence of stationary and periodic Hamiltonian processes of  the discrete SBP. We provide more examples of Hamiltonian process on graph in section 5.

{\section{Preliminary}}
{
In this section, we first briefly recall the relationship between the continuous OT problem and Hamiltonian systems. Then we introduce our motivation example on a graph and review some notations for inhomogeneous Markov process, which is used in our definition for Hamiltonian process.}

{
It is known that in a continuous OT problem \eqref{min-e}  with given marginal distributions $\rho^0$ and $\rho^1$, the optimal transfer $\{X_t\}_{t\in [0,1]}$ induces a trajectory concentrating on the geodesic path whose position and momentum obey the Hamiltonian principle (see e.g. \cite{Vil09}).} 
More precisely, recalling that $H(x,v)=\frac 12|v|^2$, the critical point of the OT problem \eqref{min-e} in density manifold satisfies the Wasserstein--Hamiltonian flow,
\begin{equation}\label{WassHam1}\begin{split}
\partial_t \rho + \nabla \cdot (\frac {\partial H}{\partial v}(x,\nabla S) \rho)&=0,\\
\partial_t S +H(x,\nabla S)
&= C(t),
\end{split}\end{equation}
where $C(t)$ is a function depending only on $t$ and $v=\nabla S$ with $|\nabla S|^2=\nabla S \cdot \nabla S$. Being a Hamiltonian system on its own,  \eqref{WassHam1} can also be connected to the following classic Hamiltonian system closely (see e.g. \cite{CLZ19}): 
\begin{equation}\label{HamIVP}\begin{split}
d_tv &=-\frac {\partial H}{\partial x}(X,v),\\
d_t X &=\frac {\partial H}{\partial v}(X,v),
\end{split}\end{equation}
where $X \in \mathbb R^d$, the conjugate momenta $v \in \mathbb R^d, d\in \mathbb N^+$, and the Hamiltonian $H$ is smooth. If the initial position $X(0)$ is random following a distribution with density $\rho^0$, the trajectory $X_t$ is random too. Its density function $\rho$, defined by the pushforward operator induced by the $X_t$, $\rho_t=X_t^{\#}\rho^0$, satisfies the Wasserstein-Hamiltonian flow \eqref{WassHam1}. 
However, directly mimicking the relationship between \eqref{WassHam1} and \eqref{HamIVP} is impossible if the underlying space become a graph. In the next subsection, we illustrate the challenges in detail by an example on graph.
 

\subsection{A motivation example}

Consider a graph $G=(V,E, \mathbf W)$ with a node set $V=\{a_i\}_{i=1}^N$, an edge set $E$, and $w_{jl}\in \mathbf W$
are the weights of the edges: 
$w_{jl}=w_{lj}>0$, if there is an edge between $a_j$ and $a_l$, and $0$ otherwise.
Below, we write $(i,j)\in E$ to denote the edge in $E$ between the vertices $a_i$ and $a_j$. We assume that $G$ is an undirected and connected graph with no self loops or multiple edges for simplicity.  Let us denote the set of discrete probabilities on the graph by: 
$$\mathcal P(G)=\{(\rho)_{j=1}^N\ :\, \sum_{j}\rho_j =1, \rho_j\ge 0,\; \text{for} \; j\in V\},$$ 
and let $\mathcal P_o(G)$ be its interior (i.e., all  $\rho_j> 0$, for $a_j\in V$).
Inspired by \cite{CLZ19,CLZ20a,Leo16}, we consider the following discrete OT problem whose minimizer is the so-called geodesic random walk. 

\begin{ex}\label{Geo-rand}
OT on $G$ (geodesic random walk).

The OT problem on a finite graph is related to the Wasserstein distance on $\mathcal{P}(G)$, which 
can be defined by the discrete Benamou--Brenier formula:
\begin{align*}
W(\rho^0,\rho^1):=\inf_{v,\rho}\Big\{\sqrt{\int_{0}^1 \<v,v\>_{\theta(\rho)}}dt \,\ : \,
\frac{d\rho}{dt}+div_G^{\theta}(\rho v)=0, \; \rho(0)=\rho^0,\; \rho(1)=\rho^1\Big\}.
\end{align*} 
where $\rho^0,\rho^1\in \mathcal P(G),$ $\rho\in  H^1([0,1],\mathbb R^N)$ and $v$ is a skew matrix valued function. 
The inner product of two vector fields  $u,v$ is defined by 
$$\<u,v\>_{\theta(\rho)}:=\frac 12\sum_{(j,l)\in E}u_{jl}v_{jl}\theta_{jl}w_{jl}$$
with the weight $\theta_{ij}$ depending on $\rho_i$ and $\rho_j.$
The divergence of the flux function $\rho v$ is defined as 
$$(div_G(\rho v))_j:=-(\sum_{l\in N(j)}w_{jl}v_{jl}\theta_{jl}),$$
where $N(i)=\{a_j\in V: (i,j)\in E\}$ is the adjacency set of node $a_i$.
Then its critical point $(\rho,v)$, with $v=\nabla_G S:=(S_j-S_l)_{(j,l)\in E}$ for some function $S$ on $V$, satisfies 
the following discrete Wasserstein-Hamiltonian flow on the graph $G$:
\begin{equation}\label{dhs}\begin{split}
&\frac {d\rho_i}{d t}+\sum_{j\in N(i)}w_{ij}(S_j-S_i)\theta_{ij}(\rho)=0,\\
&\frac {d S_i}{dt}+\frac 12\sum_{j\in N(i)}w_{ij}(S_i-S_j)^2 \frac {\partial \theta_{ij}(\rho)}{\partial \rho_i}=0.
\end{split}\end{equation}
We may view this equation as a discrete analog of \eqref{WassHam1}. Consequently, its Hamiltonian only consists of the kinetic energy $$\mathcal H(\rho,S)=\frac 14\sum_{ij}(S_i-S_j)^2\theta_{ij}(\rho)w_{ij}.$$

As discussed in \cite{Leo16}, the goal of the discrete OT problem is to find an optimal transport of the informal minimization problem
\begin{align}
\inf_{Q}\Big\{\int_0^T\frac 12\sum_{ij\in E} (v_{ij})^2\theta_{ij}w_{ij}dt: d\rho_t=\rho_t Q_t dt, \;  X(0)\sim \rho_0,\; X(T)\sim \rho_T\Big\},
\end{align} 
where the transition rate matrix $Q_t$ may be written as 
\begin{align*}
(Q_t)_{ii}&=\frac 12\sum_{j\in N(i)}w_{ij}\frac {\theta_{ij}(\rho)}{\rho_i}v_{ij},\\
(Q_t)_{ji}&=-\frac 12 w_{ji}\frac {\theta_{ji}(\rho)}{\rho_j}v_{ji},
\end{align*}
if $\theta_{ij}=\theta_{ji}.$  
In \cite{Leo16}, the minimizer of the above discrete OT problem is called the geodesic random walk which is defined as a random walk whose marginal probability is supported on
the set of geodesic paths on $\mathcal P(G), i.e,$ $X_t$ is determined by the marginal distribution and the instantaneous transition rate matrix $Q_t$.
However, examining the transition rate matrix, we can find that the geodesic random walk $X_t$ may not be well-defined, because there may not exist such a stochastic process due to possible negative probability and transition probability (See Remark \ref{rk-ot} for more details).
\end{ex}

This example illustrates that when compared to the continuous case, where the Hamiltonian system \eqref{HamIVP} on the phase space corresponds to the Hamiltonian PDEs \eqref{WassHam1} on Wasserstein manifold, such a correspondence in discrete space can't be easily established, because the counterpart of \eqref{HamIVP} requires more careful treatments.  

\subsection{Inhomogeneous Markov process}

In order to define a stochastic process which plays the role of the Hamiltonian mechanics \eqref{HamIVP} on a finite graph, we recall the definition of the inhomogeneous Markov process in \cite{Kol10}.
The linear master equation 
\begin{align*}
\frac {d\rho}{dt}=\rho Q
\end{align*}
determines a linear Markov process.
When $Q=Q(t),$ it corresponds to a time inhomogeneous Markov process.
Here $Q(t)$ is a family of infinitesimal generators of the stochastic matrix or Kolmogorov matrix, namely, a square matrix which has non-positive (resp. non-negative) elements on the main diagonal (resp. off the main diagonal), and the sum of each row is zero. Among different types of inhomogeneous Markov process, the nonlinear Markov processes whose transition function may depend not only on the current state of the process but also on the current distribution of the process is of particular interest to us.

Given an initial distribution $\rho_0$, a time inhomogeneous Markov process $\{X_t\}_{t\ge 0}$ can be defined as a process which has $\rho_0$ as the distribution of $X_0$ and $(s,t)\to P_{s,t}$ as its transition mechanism in the sense that 
\begin{align*}
\mathbb P(X_0=a_i)=\rho_i,\;
\mathbb P(X_t=a_j | X_{\sigma},\sigma\in [0,s])
=(P_{s,t})_{X(s)a_j},
\end{align*} 
where $(P_{s,t})_{a_ia_j}=\mathbb P(X_t=a_j|X_s=a_i).$
The corresponding forward Kolmogorov equation can be rewritten as 
\begin{align*}
d_tP_{s,t}=P_{s,t}Q_t.
\end{align*}
If $t\in [s,\infty)\mapsto \rho_t$ is continuously differentiable,
then 
\begin{align*}
\dot \rho_t=\rho_t Q_t, 
\end{align*}
is equivalent to
\begin{align*}
\rho_t=\rho_s P_{s,t},
\end{align*}
for $t\ge s.$
Given $(Q_t)_{t\ge 0}$, $\rho_0,$ there exists an inhomogeneous  Markov process $X_t$ related to the transition rate matrix $Q_t$ and the marginal distribution $\rho_t.$ On the other hand, given an inhomogeneous Markov process with transition matrices $P_{s,t}$, it will induce the equation of $\rho$ with $Q_t$ (see e.g. \cite{Kol10}).

\section{Hamiltonian process on a finite graph}

As shown in the Example \ref{Geo-rand}, although it may not be possible to find a stochastic process for every discrete optimal transport problem, it reveals  
two key features that the density of such a stochastic process, if exists, satisfies the generalized master equation and that its $Q
_t$-matrix is determined by a potential $S_t$, where $S_t$ satisfies a discrete Hamiltonian Jacobi equation. Inspired by these properties, we introduce the definition of stochastic Hamiltonian process.

 \begin{df}\label{df-hm-2}
 A stochastic process $\{X_t\}_{t\ge 0}$ is called a Hamiltonian process on the graph if 
 \begin{enumerate}
 \item The density $\rho_t$ of $X_t$ satisfies the following generalized Master equation,
 \begin{align*}
 d_t\rho_t=\rho_t Q_t,
 \end{align*}
with $ (Q_t)_{ij}=w_{ji}f_{ji}(v_{ji}), (Q_t)_{ii}=-\sum_{j\in N(i)}w_{ij}f_{ji}(v_{ji}),$ where the skew-matrix $v$ is induced by a potential function $S$, i.e. $v=\nabla_G S+u,$ with $div_G(\rho u)=0.$
And $f_{ij}: \mathbb R\to\mathbb R, $ is a real-valued measurable function which is piecewise continuous in $x\in \mathbb R$ and may depend on $t$ and $\rho$.  
 \item The density $\rho$ and the potential $S$ form a Hamiltonian system on the cotangent bundle of the density space. 
 \end{enumerate}
 \end{df}
 
 The following theorem gives the structure of the Hamiltonian on the density manifold of the Hamiltonian process.

 \begin{tm}
   Suppose that the stochastic process $\{X_t\}_{t\geq 0}$ with density $\{\rho_t\}_{t \geq 0}$ and potential $\{S_t\}_{t\geq 0}$ defined in the Definition \ref{df-hm-2} forms a Hamiltonian process on the graph $G$. 
  In addition assume that the antiderivative $F_{ij}$ of $f_{ij}$ exists for $ij\in E.$
   Then the Hamiltonian always have the form 
   \begin{align}
    \mathcal{H}(\rho, S) & = \sum_{i\in V}\sum_{j\in N(i)} \rho_i F_{ji}(S_j-S_i,\rho,t)w_{ji} + \mathcal{V}(\rho, t) \label{hamiltonian} 
   \end{align}
   where $\mathcal V$ is a function depending $\rho$ and $t.$
Moreover, the Hamiltonian system on the cotangent bundle of $\mathcal{P}(G)$ can be written as:
  \begin{align*}
    & \frac{\partial}{\partial t} \rho_i(t) = \sum_{j\in N(i)} w_{ij}f_{ij}(S_i-S_j, \rho, t)\rho_j - w_{ji}f_{ji}(S_j-S_i, \rho, t)\rho_i \\
    & \frac{\partial}{\partial t} S_i(t) = - \sum_{j\in N(i)} \left( w_{ji}F_{ji}( S_j - S_i ,\rho, t ) + \rho_i \frac{\partial}{\partial \rho_i}F_{ji}(S_j-S_i, \rho, t)w_{ji} \right) - \frac{\partial}{\partial \rho_i} \mathcal{V}(\rho, t).
  \end{align*}
\end{tm}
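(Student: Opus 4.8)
The plan is to reverse-engineer the Hamiltonian from the requirement that the two dynamical equations of Definition \ref{df-hm-2} form a Hamiltonian system on the cotangent bundle $T^*\mathcal{P}(G)$ with canonical coordinates $(\rho, S)$, i.e.\ that
\[
\frac{\partial}{\partial t}\rho_i = \frac{\partial \mathcal{H}}{\partial S_i}, \qquad
\frac{\partial}{\partial t}S_i = -\frac{\partial \mathcal{H}}{\partial \rho_i}.
\]
First I would write the prescribed density equation: since $d_t\rho_t = \rho_t Q_t$ with the given off-diagonal and diagonal entries of $Q_t$, expanding $(\rho_t Q_t)_i = \sum_{j} \rho_j (Q_t)_{ji} + \rho_i (Q_t)_{ii}$ gives exactly
\[
\frac{\partial}{\partial t}\rho_i = \sum_{j\in N(i)} w_{ij} f_{ij}(S_i - S_j, \rho, t)\rho_j - w_{ji} f_{ji}(S_j - S_i, \rho, t)\rho_i,
\]
using $v = \nabla_G S + u$ with $\mathrm{div}_G(\rho u) = 0$ so that the $u$-part contributes nothing to the evolution of $\rho$ (this is where the divergence-free condition is used). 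This is the first claimed equation, so the real content is the Hamiltonian itself.

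The second step is to recover $\mathcal{H}$ by integrating the relation $\frac{\partial \mathcal{H}}{\partial S_i} = \frac{\partial}{\partial t}\rho_i$ in the $S$-variables. I would treat $\rho$ and $t$ as frozen parameters and look for $\mathcal{H}$ whose $S_i$-derivative reproduces the right-hand side above. The key observation is that $\frac{\partial}{\partial S_i} F_{ji}(S_j - S_i, \rho, t) = -f_{ji}(S_j - S_i, \rho, t)$ and $\frac{\partial}{\partial S_i} F_{ij}(S_i - S_j, \rho, t) = f_{ij}(S_i - S_j, \rho, t)$; hence differentiating the candidate $\sum_{k\in V}\sum_{j\in N(k)} \rho_k F_{jk}(S_j - S_k, \rho, t) w_{jk}$ with respect to $S_i$, only the terms with $k = i$ (giving $-\sum_{j\in N(i)} \rho_i f_{ji}(S_j - S_i)w_{ji}$) and the terms with $j = i$ (giving $+\sum_{k\in N(i)} \rho_k f_{ik}(S_i - S_k)w_{ik}$, after relabeling) survive, and using $w_{ij} = w_{ji}$ these combine to exactly $\frac{\partial}{\partial t}\rho_i$. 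Since $T^*\mathcal{P}(G)$ is connected and simply connected (an open subset of a Euclidean slice), a function with the correct gradient in $S$ is determined up to an additive function of $(\rho, t)$, which I name $\mathcal{V}(\rho, t)$; this yields the form \eqref{hamiltonian}.

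The third step is to verify the $S$-equation by computing $-\frac{\partial \mathcal{H}}{\partial \rho_i}$ from \eqref{hamiltonian}. Differentiating $\sum_k \sum_{j\in N(k)} \rho_k F_{jk}(S_j - S_k, \rho, t) w_{jk}$ with respect to $\rho_i$ produces, via the product rule and chain rule, the explicit $F_{ji}$ term (from the $\rho_k$ factor with $k = i$) plus the $\rho_i \frac{\partial}{\partial \rho_i} F_{ji}$ term (from the $\rho$-dependence of $F$ when $k = i$), plus contributions of the form $\rho_k \frac{\partial}{\partial \rho_i}F_{jk}$ for $k \neq i$. I would need to argue that these cross terms are absorbed into the statement — either by interpreting $\frac{\partial}{\partial\rho_i}F_{ji}(S_j - S_i, \rho, t)$ as the \emph{total} partial derivative collecting all $\rho_i$-dependence, or by noting that the natural choice of $\theta$-type weights makes $F_{jk}$ depend on $\rho$ only through $\rho_j$ and $\rho_k$ so that the surviving cross terms are precisely those written; together with $-\frac{\partial}{\partial\rho_i}\mathcal{V}$ this matches the claimed $S$-equation. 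The main obstacle is exactly this bookkeeping of the $\rho$-dependence of $F_{ij}$ in the second equation: the statement as written is clean only if one is careful about whether $\frac{\partial}{\partial\rho_i}$ acts on the first slot of $F_{ji}$ (which it does not, since $S_j - S_i$ is $\rho$-independent) or on its explicit $\rho$-argument, and about how the off-diagonal $\rho_j$-dependence of the other summands is accounted for; I would handle this by fixing the convention at the outset and checking the symmetric graph structure $w_{ij} = w_{ji}$ makes the two halves of the sum pair up correctly.
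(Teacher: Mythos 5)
Your proposal is correct and follows essentially the same route as the paper: write down the density equation from the structure of $Q_t$, identify the candidate $\mathcal{H}_0=\sum_{i}\sum_{j\in N(i)}\rho_i F_{ji}(S_j-S_i,\rho,t)w_{ji}$, check that $\partial_{S}(\mathcal{H}-\mathcal{H}_0)=0$ so that the difference is a function $\mathcal{V}(\rho,t)$, and then read off the $S$-equation from $\partial_t S=-\partial_\rho\mathcal{H}$. You are in fact more explicit than the paper at two points — the actual $S_i$-differentiation of $\mathcal{H}_0$ (which the paper dismisses with ``we can directly verify'') and the bookkeeping of the cross terms $\rho_k\,\partial_{\rho_i}F_{jk}$ for $k\neq i$ in the $S$-equation, which the paper's stated component form silently omits — so your caveat there is a legitimate refinement rather than a gap.
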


\begin{proof}
According to Definition \ref{df-hm-2}, we have $\frac{\partial}{\partial t} \rho_i(t) = \sum_{j\in N(i)} \omega_{ij}f_{ij}(S_i-S_j, \rho, t)\rho_j - w_{ji}f_{ji}(S_j-S_i, \rho, t)\rho_i $.  Since $\{\rho_t, S_t\}$ forms a Hamiltonian system, we are able to state 
\begin{equation*}
   \frac{\partial}{\partial S_i}\mathcal{H}(\rho, S, t ) =  \sum_{j\in N(i)} w_{ij}f_{ij}(S_i-S_j,\rho, t)\rho_j - w_{ji}f_{ji}(S_j-S_i,\rho,t)\rho_i,   \quad i\in V.
\end{equation*}
Considering the following quantity,
\begin{equation*}
  \mathcal{H}_0(\rho, S, t) =  \sum_{i\in V}\sum_{j\in N(i)} \rho_i F_{ji}(S_j-S_i,\rho,t)w_{ji}, 
\end{equation*}
 we can directly verify that $\frac{\partial}{\partial S}(\mathcal{H}(\rho, S, t)-\mathcal{H}_0(\rho, S , t))=0$. This suggests that there exists some function $\mathcal V$ depending on $\rho$ and $t$ such that $\mathcal{H}(\rho, S, t ) - \mathcal{H}_0(\rho, S, t) = \mathcal{V}(\rho, t)$. This directly leads to form of Hamiltonian $\mathcal{H}(\rho, S, t) = \sum_{i\in V}\sum_{j\in N(i)} \rho_i F_{ji}(S_j-S_i, \rho, t )w_{ji} + \mathcal{V}(\rho, t ) $. 
Furthermore, the discrete Hamiltonian Jacobi equation is derived as
\begin{equation*}
  \frac{\partial}{\partial t}S_t = - \frac{\partial}{\partial \rho}\mathcal{H}(\rho, S, t).
\end{equation*}
\end{proof}

As a direct consequence, we have the following properties of Hamiltonian process.

\begin{prop}
Assume that a stochastic process $X_t$ on a finite graph is a Hamiltonian process. Then it holds that 
 \begin{enumerate}
 \item there exists a Hamiltonian $\mathcal H$ on the density space such that its marginal distribution $\rho_t= X_t^{\#}\rho_0$ and the generator $S_t$ of the transition rate matrix $Q_t$ forms a Hamiltonian system;
  
 \item the symplectic structure on the density space is preserved, i.e., 
 \begin{align*}
 \omega_{g(\rho,S)}(g'(\rho,S)\xi,g'(\rho,S)\eta)=\omega_{(\rho,S)}(\xi,\eta),
 \end{align*}
 where $\xi,\eta\in T_{\rho,S}(T^{*}P(G)),$ and $g'(\rho,S)$ is the Jacobi matrix of the Hamiltonian flow on the density space;
 \item $\mathcal H(t)=\mathcal H(0)$, if the Hamiltonian $\mathcal H$ is independent of $t$; 
 \item and $X_t$ is mass-preserving, i.e., $\sum_{i=1}^N\rho_i(t)=\sum_{i=1}^N\rho_i(0).$ 
 \end{enumerate}
\end{prop}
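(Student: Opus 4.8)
The plan is to dispatch the four claims one at a time, each of them being a short corollary of the Theorem together with classical facts about flows of Hamiltonian vector fields on a symplectic manifold. Claim (1) is essentially a repackaging of what we already have: by Definition~\ref{df-hm-2} the marginal $\rho_t=X_t^{\#}\rho_0$ and the potential $S_t$ generating $Q_t$ form a Hamiltonian system on the cotangent bundle $T^{*}\mathcal P(G)$, and the Theorem identifies the Hamiltonian explicitly as $\mathcal H$ in \eqref{hamiltonian}. So for (1) I would simply invoke Definition~\ref{df-hm-2} and the Theorem.

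For claim (4) I would sum the $\rho$-equation of the Theorem over $i\in V$ and use that $G$ is undirected with $w_{ij}=w_{ji}$ and $(i,j)\in E\Leftrightarrow (j,i)\in E$: relabelling $i\leftrightarrow j$ in the first term converts it into the second, so
\begin{equation*}
\frac{d}{dt}\sum_{i\in V}\rho_i(t)=\sum_{i\in V}\sum_{j\in N(i)}\bigl(w_{ij}f_{ij}(S_i-S_j,\rho,t)\rho_j-w_{ji}f_{ji}(S_j-S_i,\rho,t)\rho_i\bigr)=0.
\end{equation*}
Equivalently, the diagonal of $Q_t$ is chosen precisely so that every row of $Q_t$ sums to zero, hence $\frac{d}{dt}\sum_i\rho_i=\sum_i(\rho_tQ_t)_i=\sum_j\rho_j\sum_i(Q_t)_{ji}=0$. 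Claim (3) is the usual energy identity: along the Hamiltonian flow $\dot\rho_i=\partial_{S_i}\mathcal H$, $\dot S_i=-\partial_{\rho_i}\mathcal H$, so
\begin{equation*}
\frac{d}{dt}\mathcal H(\rho_t,S_t)=\sum_{i\in V}\Bigl(\frac{\partial\mathcal H}{\partial\rho_i}\frac{\partial\mathcal H}{\partial S_i}-\frac{\partial\mathcal H}{\partial S_i}\frac{\partial\mathcal H}{\partial\rho_i}\Bigr)+\partial_t\mathcal H=\partial_t\mathcal H,
\end{equation*}
which vanishes when $\mathcal H$ does not depend on $t$.

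For claim (2) I would use the classical fact that the flow of a Hamiltonian vector field preserves the canonical symplectic form. Writing $X_{\mathcal H}$ for the Hamiltonian vector field on $T^{*}\mathcal P(G)$ and $\omega$ for the canonical two-form, Cartan's formula gives $\mathcal L_{X_{\mathcal H}}\omega=d(\iota_{X_{\mathcal H}}\omega)+\iota_{X_{\mathcal H}}d\omega=d(d\mathcal H)+0=0$, and integrating this in time yields $(g^t)^{*}\omega=\omega$ for the time-$t$ flow map $g=g^t$, which is exactly the stated identity $\omega_{g(\rho,S)}(g'(\rho,S)\xi,g'(\rho,S)\eta)=\omega_{(\rho,S)}(\xi,\eta)$. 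The one point that needs care, and which I expect to be the only genuinely nontrivial step, is the geometric bookkeeping: $\mathcal P(G)$ is the simplex rather than an open subset of $\mathbb R^N$, and the potential $S$ carries the gauge freedom $S\mapsto S+c\mathbf 1$. One therefore works on the interior $\mathcal P_o(G)$, uses claim (4) to remain on the level set $\sum_i\rho_i=1$ (the total mass playing the role of the momentum map for the constant-shift action on $S$), and checks that the canonical symplectic form descends to the reduced cotangent bundle; once this reduced symplectic manifold is correctly identified, the invariance of $\omega$ under the flow is automatic from $\mathcal L_{X_{\mathcal H}}\omega=0$.
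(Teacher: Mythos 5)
Your proposal is correct and follows exactly the route the paper intends: the paper states this proposition ``as a direct consequence'' of the preceding theorem and supplies no proof at all, and your four arguments (restatement of Definition~\ref{df-hm-2} plus the theorem for (1), vanishing row sums of $Q_t$ from $w_{ij}=w_{ji}$ for (4), the standard energy identity for (3), and $\mathcal L_{X_{\mathcal H}}\omega=0$ via Cartan's formula for (2)) are precisely the omitted details. Your additional remark about the gauge freedom $S\mapsto S+c\mathbf 1$ and working on $\mathcal P_o(G)$ is a legitimate point of care that the paper glosses over, but it does not change the conclusion.
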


Based on the definition of Hamiltonian process, we are able to construct the discrete optimal transport problem which retains the property that the minimizer is a stochastic process on the graph for Example \ref{Geo-rand}.

\begin{prop}
There always exists a density dependent weight $\theta$ such that the geodesic random walk in Example \ref{Geo-rand} is a Hamiltonian process.
\end{prop}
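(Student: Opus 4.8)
The plan is to exhibit an explicit choice of the density-dependent weight $\theta$ for which the discrete OT problem in Example \ref{Geo-rand} produces a genuine stochastic process, and then to verify that this process satisfies the two conditions of Definition \ref{df-hm-2}. The obstruction identified in Example \ref{Geo-rand} (see Remark \ref{rk-ot}) is that the transition rate matrix
\[
(Q_t)_{ji}=-\tfrac12 w_{ji}\frac{\theta_{ji}(\rho)}{\rho_j}v_{ji}
\]
can fail to have non-negative off-diagonal entries, because $\theta_{ji}/\rho_j$ need not be bounded and $v_{ji}=S_j-S_i$ can have either sign. So the design goal for $\theta$ is: the ratio $\theta_{ij}(\rho)/\rho_i$ should stay bounded (ideally vanishing when $\rho_i\to 0$ so that the process does not leak mass out of emptied nodes) while still depending on $\rho$ in a way that makes $\theta$ admissible as an interpolation weight (symmetry $\theta_{ij}=\theta_{ji}$, positivity on $\mathcal P_o(G)$, and the right homogeneity so that the Benamou--Brenier functional is well posed).

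First I would recall, following the structure in \cite{CHLZ12, Mas11}, the standing admissibility hypotheses on $\theta$ (it is a $C^1$, symmetric, positive function of $(\rho_i,\rho_j)$ that is $1$-homogeneous and reduces to $\rho_i$ when $\rho_i=\rho_j$), and observe that the logarithmic mean $\theta_{ij}(\rho)=\frac{\rho_i-\rho_j}{\log\rho_i-\log\rho_j}$ is the canonical such choice. The key elementary fact is the two-sided bound $\min(\rho_i,\rho_j)\le \theta_{ij}(\rho)\le \max(\rho_i,\rho_j)$, hence $0\le \theta_{ij}(\rho)/\rho_i\le \max(1,\rho_j/\rho_i)$; more importantly $\theta_{ij}(\rho)\le \rho_i$ is false in general but $\theta_{ij}(\rho)/\rho_i \to 0$ as $\rho_i\to 0$ with $\rho_j$ fixed, which is exactly what prevents the emptied node from pumping negative probability. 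I would then rewrite $(Q_t)_{ji}$ with this $\theta$ and split into the two cases $v_{ji}\ge 0$ and $v_{ji}<0$, absorbing the sign into the antisymmetry $v_{ij}=-v_{ji}$ and $\theta_{ij}=\theta_{ji}$, to check that one can always realize the flux $\rho_i v_{ij}\theta_{ij}$ by a legitimate pair of non-negative rates (pushing mass along the edge in the direction of decreasing $S$), so that $Q_t$ is a bona fide Kolmogorov generator and, by the inhomogeneous-Markov existence statement recalled in Section 2 (following \cite{Kol10}), an inhomogeneous Markov process $X_t$ with marginals $\rho_t$ exists.

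Next I would verify the two clauses of Definition \ref{df-hm-2}. Clause (1) is immediate from the construction: $d_t\rho_t=\rho_t Q_t$ is the first equation of \eqref{dhs}, the rates have the required form $(Q_t)_{ij}=w_{ji}f_{ji}(v_{ji})$ with $f_{ji}(x)=\frac12\frac{\theta_{ji}(\rho)}{\rho_j}x$ (piecewise continuous, possibly $\rho$- and $t$-dependent), and $v=\nabla_G S$ is a gradient so the divergence-free part $u$ is zero. Clause (2) is exactly the content already recorded in Example \ref{Geo-rand}: the critical point $(\rho,S)$ of the discrete Benamou--Brenier functional solves the discrete Wasserstein--Hamiltonian system \eqref{dhs}, which is a Hamiltonian system on $T^*\mathcal P(G)$ with Hamiltonian $\mathcal H(\rho,S)=\frac14\sum_{ij}(S_i-S_j)^2\theta_{ij}(\rho)w_{ij}$ — indeed this is consistent with the general form \eqref{hamiltonian} of the preceding theorem, taking $F_{ij}(x,\rho)=\frac14 x^2\frac{\theta_{ij}(\rho)}{\rho_i}$ as antiderivative of $f_{ij}$ and $\mathcal V\equiv 0$. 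Hence with this $\theta$ the minimizer of the OT problem in Example \ref{Geo-rand} is a Hamiltonian process, which is the claim.

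The main obstacle is the case analysis in the second paragraph: one must show that for \emph{every} admissible gradient potential $S$ and every $\rho\in\mathcal P_o(G)$ the resulting $Q_t$ has non-negative off-diagonal entries and zero row sums along the whole trajectory, i.e. that $\mathcal P_o(G)$ is forward-invariant under \eqref{dhs} so that $\rho_i$ never actually reaches $0$ in finite time (otherwise $\theta_{ij}/\rho_i$, while bounded, is only controlled by the behaviour of $\rho_j$, and one needs the flux itself, not the ratio, to stay sign-consistent). I expect this to follow from the same Lyapunov/entropy estimates that give well-posedness of the discrete Hamiltonian flow in \cite{CLZ19, gangbo2019geodesics} — the logarithmic-mean $\theta$ makes $\dot\rho_i$ vanish to first order as $\rho_i\to 0$, giving a comparison argument that keeps $\rho$ in the open simplex — but writing that carefully, including what happens on the boundary of $\mathcal P(G)$ or for non-gradient $v$, is where the real work lies; everything else is bookkeeping against Definition \ref{df-hm-2} and the Hamiltonian already displayed in Example \ref{Geo-rand}.
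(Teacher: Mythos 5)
Your choice of weight is the wrong one, and the key analytic claim supporting it is false. You propose the logarithmic mean $\theta^L_{ij}=\frac{\rho_i-\rho_j}{\log\rho_i-\log\rho_j}$, but this is precisely one of the two weights that Remark \ref{rk-ot} singles out as \emph{failing} to produce a stochastic process: with $\theta^L$ the off-diagonal rate is proportional to $\frac{S_i-S_j}{\log\rho_i-\log\rho_j}$, whose sign depends on two uncorrelated quantities, so non-negativity of the off-diagonal entries is not guaranteed. Your attempted rescue rests on the assertion that $\theta^L_{ij}/\rho_i\to 0$ as $\rho_i\to 0$ with $\rho_j$ fixed; in fact $\theta^L_{ij}/\rho_i=\frac{\rho_i-\rho_j}{\rho_i(\log\rho_i-\log\rho_j)}\sim \frac{\rho_j}{\rho_i\,|\log\rho_i|}\to+\infty$, so the rates blow up near the boundary of the simplex rather than vanish. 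The ``upwind reassignment of rates'' you sketch (putting the whole flux $w_{ij}(S_i-S_j)\theta_{ij}$ on whichever direction mass actually flows) does give zero row sums and non-negative off-diagonal entries pointwise on $\mathcal P_o(G)$ for any non-negative $\theta$, but it leaves the rates unbounded as $\rho_i\to 0$ and therefore does not by itself yield a well-defined inhomogeneous Markov process on the whole time interval. You correctly identify forward-invariance of $\mathcal P_o(G)$ under \eqref{dhs} as ``where the real work lies,'' but you never establish it, and for $\theta^L$ there is no reason it should hold — so the proof has a genuine hole exactly at its load-bearing step.

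The paper's construction sidesteps all of this by choosing the weight itself in an upwind fashion: $\theta^U_{ij}=\rho_i$ when $S_j>S_i$ (and $\rho_j$ otherwise). With this choice the factor $\theta_{ij}/\rho_j$ appearing in the rate is identically $1$ or $0$, so the off-diagonal rates become $(Q_t)_{ji}=w_{ji}(S_j-S_i)^+$, which are manifestly non-negative, bounded independently of $\rho$, and of the form $w_{ji}f_{ji}(v_{ji})$ with $f_{ji}(x)=x^+$ required by Definition \ref{df-hm-2}; the Hodge decomposition then supplies the potential $S$ and the Hamilton--Jacobi equation, and the inhomogeneous Markov process is constructed directly from $Q_t$. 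If you want to repair your argument, replace $\theta^L$ by $\theta^U$; the remainder of your bookkeeping against Definition \ref{df-hm-2} (clause (1) from the master equation, clause (2) from the kinetic-energy Hamiltonian of Example \ref{Geo-rand}) then goes through essentially as you wrote it.
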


\begin{proof}
Define $\theta^U_{ij}=\theta^U_S (\rho_i,\rho_j),$ where $\theta^U_S(\rho_i,\rho_j)=\rho_i$ if $S_j>S_i.$
Denote $(x)^+=\max(0,x), (x)^{-}=\min(0,x).$
Using the notations in Example \ref{Geo-rand}, the geodesic random walk on $G$ with the probability weight $\theta=\theta^U$ satisfies  
\begin{align}\label{geo-den}
d\rho_i&=\sum_{j\in N(i)}w_{ij}(v_{ij})^+\rho_j+\sum_{j\in N(i)}w_{ij}(v_{ij})^-\rho_i.
\end{align}
From the discrete Hodge decomposition on the graph \cite{CLZ19}, for any skew matrix $v$ and probability density $\rho\in \mathcal P_o(G)$, there exists a decomposition $v=\nabla_G S+u$ with $div_G(\rho u)=0.$
Here $(\nabla_G S)_{ij}:=(S_i-S_j)$ and $div_G(\rho u):=-(\sum_{j\in N(i)} u_{ij}\theta_{ij}^U(\rho)).$
To see this fact, it suffices to prove that there exists a unique solution of $S$ such that $div_G(\rho\nabla_G S)=div_G(\rho v).$ The connectivity of the graph and the fact that $\rho\in \mathcal P_o(G)$ implies that if 
\begin{align*}
\<div_G(\rho\nabla_G S),S\>=\frac 12\sum_{(i,j)\in E}((S_i-S_j)^-)^2\theta_{ij}(\rho)=0,
\end{align*}
then $0$ must be a simple eigenvalue of $div_G(\rho \nabla_G)$ with eigenvector $(1,\cdots,1).$ Thus $S$ is unique up to a constant shift and 
the skew matrix $v_t=\nabla_G S_t+u$ satisfies  
\begin{align*}
d(S_t)_{i}=-\frac 12\sum_{j\in N(i)}w_{ij}((S_{i}-S_{j})^-)^2+C(t),\;  div_G(\rho u)=0,
\end{align*}
where $C(t)$ is independent of nodes.  Meanwhile, $f_{ij}$ can be selected to achieve 
$f_{ij}(S_i-S_j)=(S_i-S_j)^+$  and thus
\begin{align*}
(Q_t)_{ii}&=\sum_{j\in N(i)}w_{ij}(S_i-S_j)^{-}=\sum_{j\in N(i)}w_{ij}f_{ji}(S_j-S_i),\\
(Q_t)_{ji}&=w_{ji}(S_j-S_i)^+=w_{ji}f_{ji}(S_j-S_i),\; ij\in E ,\; \text{otherwise} \; Q_{ji}=0.
\end{align*} 

We can define a time inhomogenous Markov process as follows by the
transition matrix $\mathbb P(X_t=v_j | X_{\tau},\tau\in [0,s])
=(P_{s,t})_{X(s)v_j}$.
Given the past $\sigma(\{X_{\tau}:\tau\in [0,t]\})$ of $X$ up to time $t\ge0,$ the probability of its having moved away from $X_t$ at the time $t+h$
with $h$ small enough can be approximated by $1-(Q_t)_{X_tX_t}h,$ i.e.,
\begin{align*}
\Big|\mathbb P(X(t+h)=X_t | X_{\tau}, \tau\le t)-1-(Q_t)_{X_tX_t}h\Big|= o(h).
\end{align*}
 Here $\{-(Q_t)_{ii}\}_{i}$ is often called as the transition rate of $X_t.$
Given the history that the jump appeared $\sigma(\{X_{\tau}:\tau\in [0,t] \}\cup\{X_{t+h}\neq X_t\})$, the probability that $X_{t+h}=a_j$ is approximately $(P_{t,t+h})_{X_ta_j}$, which implies that   
\begin{align*}
\Big|\mathbb P(X(t+h)=a_j|X_{\tau},\tau\le t)-h(Q_t)_{X_ta_j}\Big|= o(h).
\end{align*}
\end{proof}

\begin{rk}\label{rk-ot}
Not every discrete optimal transport problem on the graph  has a corresponding stochastic process as a realization. The density dependent weight in the discrete optimal transport problem can not be chosen arbitrarily. Take $w_{ij}=1$ if $ij\in E$ for simplicity.
For example, if we take the probability weight $\theta_{ij}=\theta^A(\rho_i,\rho_j)=\frac 12(\rho_i+\rho_j)$ in \cite{CLZ19}, the density equation can be rewritten as 
\begin{align*}
d_t\rho_t=\rho_t Q_t,
\end{align*}
where 
\begin{align*}
(Q_t)_{ii}&=\frac 12\sum_{j\in N(i)}(S_i-S_j),\\
(Q_t)_{ij}&=\frac 12(S_j-S_i),\; ij\in E ,\; \text{otherwise} \; Q_{ij}=0.
\end{align*}
The function $f_{ij}(x)=\frac 12 x$.
When $\theta_{ij}=\theta^L(\rho_i,\rho_j)=\frac {\rho_i-\rho_j}{\log(\rho_i)-\log(\rho_j)}$ in \cite{CHLZ12}, the density equation can be rewritten as 
\begin{align*}
d_t\rho_t=\rho_t Q_t,
\end{align*}
where 
\begin{align*}
(Q_t)_{ii}&=\sum_{j\in N(i)}\frac{(S_i-S_j)}{\log(\rho_i)-\log(\rho_j)},\\
(Q_t)_{ij}&=-\frac{(S_i-S_j)}{\log(\rho_i)-\log(\rho_j)},\; ij\in E ,\; \text{otherwise} \; Q_{ij}=0.
\end{align*}
The function $f_{ij}(x)=\frac x{\log(\rho_i)-\log(\rho_j)}$.
In these two cases, there is no guarantee that the off-diagonal of $Q_t$ is non-positive. If this happens, $Q_t$ does not determine a process $X_t$ which is time inhomogeneous Markov since the negative probability and the negative transition probability may appear. For more possible choices of $\theta,$ we refer to \cite{Mas11} and references therein.
\end{rk}

 \begin{rk}
If $\theta_{ij}>0$ for all $ij\in E$, then the Hodge decomposition yield a unique potential $S$ up to a constant which induces $v.$ If there exists $ij\in E$ such that $\theta_{ij}=0$, then the generator $S$ may be not unique.
Meanwhile, the Hamiltonian Jacobi equation may become one-side inequality,
\begin{align*}
v_{ij}=S_i-S_j, \;
\partial_t S_i+  \frac {\partial}{\partial \rho_i} \mathcal{H}(\rho, S) \le  0.
\end{align*}
\end{rk}

\begin{rk}
The initial value problem of the Hamiltonian system of $\rho,S$ may develop singularity at a finite time $T>0$, i.e, either $\lim_{t\to T}S_i(t)=\infty$ or $\lim_{t\to T}\rho_i\le 0.$
 \end{rk}

We would like to emphasize that a Hamiltonian process is not Markov in general.
The sufficient and necessary conditions when a Hamiltonian process gives a Markov process is presented as follows.
 
 \begin{tm}\label{mar-ham}
Given a Hamiltonian process $\{X_t\}_{t\ge 0}$ on the graph with a Hamiltonian $\mathcal H(\rho,S)=\sum_{i=1}^N\sum_{j\in N(i)}F_{ij}(\rho,S)w_{ij}\rho_i$. 
If $X_t$ is a Markov process, then 
$(\rho,S)$ in Definition \eqref{df-hm-2} satisfies the following system,
 \begin{align}\label{mar-con}
 &\frac {\partial^2 F_{ij}}{\partial S_i\partial \rho_i}\Big(\sum_{l\in N(i)}\frac {\partial F_{il}}{\partial S_i}\rho_iw_{il}+\sum_{l\in N(i)}\frac {\partial F_{li}}{\partial S_i}\rho_lw_{li}\Big)\\\nonumber
&+\frac {\partial^2 F_{ij}}{\partial S_i\partial \rho_j}\Big(\sum_{k\in N(j)}\frac {\partial F_{jk}}{\partial S_j}\rho_jw_{jk}+\sum_{k\in N(j)}\frac {\partial F_{kj}}{\partial S_j}\rho_lw_{kj}\Big)\\\nonumber 
&-\frac {\partial^2 F_{ij}}{\partial S_i\partial S_i}\Big(\sum_{l\in N(i)}\frac {\partial F_{il}}{\partial \rho_i}\rho_iw_{il}+\sum_{l\in N(i)}\frac {\partial F_{li}}{\partial \rho_i}\rho_lw_{li}+\sum_{l\in N(i)}(F_{il}w_{il}+F_{li}w_{li})\Big)
\\\nonumber 
&-\frac {\partial^2 F_{ij}}{\partial S_i\partial S_j}\Big(\sum_{k\in N(j)}\frac {\partial F_{jk}}{\partial \rho_j}\rho_jw_{jk}+\sum_{k\in N(j)}\frac {\partial F_{kj}}{\partial \rho_j}\rho_kw_{kj}+\sum_{k\in N(j)}(F_{jk}w_{jk}+F_{ki}w_{ki})\Big)=0
 \end{align}
for $i,j\in V.$
Conversely, if $(\rho,S)$ satisfies \eqref{mar-con}, then there exists a Markov process which is Hamiltonian.

\end{tm}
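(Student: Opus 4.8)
The plan is to characterize when the Hamiltonian process $\{X_t\}_{t\ge 0}$ is Markov by translating the Markov property into a condition on the transition rate matrix $Q_t$, and then expressing that condition purely in terms of $(\rho,S)$ via the Hamiltonian system. Recall from Definition \ref{df-hm-2} that $(Q_t)_{ij}=w_{ji}f_{ji}(v_{ji})=w_{ji}f_{ji}(S_j-S_i)$ and, since $\mathcal H=\sum_{i}\sum_{j\in N(i)}F_{ij}(\rho,S)w_{ij}\rho_i$ with $f_{ij}=\partial F_{ij}/\partial S_i$ (so that $F_{ij}$ is the antiderivative in the first slot appearing in the previous theorem), the entries of $Q_t$ are governed by $\partial F_{ji}/\partial S_j$. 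A time-inhomogeneous Markov process on a finite graph is determined by its family of rate matrices $Q_t$; the process $X_t$ defined through $\rho_t$ and $Q_t$ is genuinely Markov precisely when $Q_t$ depends on $t$ \emph{only through $t$ itself and not through the realized path} — equivalently, when the map $t\mapsto Q_t$ obtained along the Hamiltonian flow is a well-defined function of time alone. Since $Q_t$ is a function of $(\rho_t,S_t)$, which in turn evolves by the autonomous Hamiltonian system, the obstruction to being Markov is that $Q_t$ (hence the relevant derivatives of $F$) must be consistent with the deterministic evolution; the nonlinear dependence of $F_{ij}$ on $\rho$ is what can destroy the Markov property, because then the rate seen at time $t$ encodes $\rho_t$, which itself was built from the distribution of the whole process.

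First I would write out $\frac{d}{dt}(Q_t)_{ji}$ along the flow. Using the chain rule, $\frac{d}{dt}(Q_t)_{ji}=w_{ji}\big(\frac{\partial^2 F_{ji}}{\partial S_j\,\partial S_j}\dot S_j+\frac{\partial^2 F_{ji}}{\partial S_j\,\partial S_i}\dot S_i+\sum_k \frac{\partial^2 F_{ji}}{\partial S_j\,\partial\rho_k}\dot\rho_k\big)$, and then substitute the Hamiltonian equations of motion from the preceding theorem for $\dot\rho_k$ and $\dot S_k$. Collecting terms — and using that $F_{ji}$ only depends on the local variables $S_i,S_j,\rho_i,\rho_j$ so the $\rho_k$-sum collapses to $k\in\{i,j\}$ — produces exactly the four-term expression displayed in \eqref{mar-con}, where the bracketed factors are $\dot S_i$, $\dot S_j$ (split into their $\partial F/\partial\rho$ part and their $F$-part) reorganized. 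The Markov condition is then that this total $t$-derivative, expressed through $(\rho,S)$, must vanish as a constraint identity, i.e. the rate matrix is \emph{self-consistent} along the flow; this is precisely \eqref{mar-con}. For the converse, given $(\rho,S)$ satisfying \eqref{mar-con}, one checks that $t\mapsto Q_t$ is then compatible with being a rate matrix of a bona fide time-inhomogeneous Markov chain (invoking the existence result for inhomogeneous Markov processes recalled in Section~2, i.e. that $(Q_t,\rho_0)$ determines such a process), and that the resulting process has $\rho_t$ as marginal and forms the Hamiltonian system, hence is Hamiltonian.

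The main obstacle I expect is making precise \emph{what Markovianity means} for a process defined only through its one-dimensional marginal $\rho_t$ and a rate matrix $Q_t$ that is itself a functional of $\rho_t$: one must argue that the nonlinear (density-dependent) feedback in $f_{ij}$ generically violates the Chapman–Kolmogorov equation, and that the exact algebraic obstruction is \eqref{mar-con}. Concretely, the delicate step is justifying that the compatibility of $Q_t$ with the flow — rather than, say, some higher-order condition — is both necessary and sufficient; this requires carefully tracking how the law of $X_t$ enters $\rho_t$ and showing the second-order derivative terms in \eqref{mar-con} capture the full obstruction. The bookkeeping in matching the substituted equations of motion to the precise grouping in \eqref{mar-con} (including the slightly asymmetric roles of $i$ and $j$, and the appearance of $\rho_l$ versus $\rho_k$ in the stated formula) is routine but error-prone, so I would set up consistent notation for $\partial_{S_i}F_{ij}$, $\partial_{\rho_i}F_{ij}$ and the mixed second derivatives before expanding.
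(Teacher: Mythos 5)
Your computation is mechanically the same as the paper's: you differentiate the off-diagonal entries $(Q_t)_{ji}$, which are given by $w_{ji}\,\partial F_{ji}/\partial S_j$, along the flow, substitute the Hamiltonian equations of motion for $\dot\rho_i,\dot\rho_j,\dot S_i,\dot S_j$, use the locality of $F_{ij}$ in $(\rho_i,\rho_j,S_i,S_j)$ to collapse the sums, and identify the resulting four-term expression with \eqref{mar-con}. The converse is likewise handled as in the paper: once \eqref{mar-con} holds the master equation becomes linear, and one invokes the existence of an inhomogeneous Markov process for a given rate matrix and initial law.

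The genuine gap is in the bridge from ``$X_t$ is Markov'' to ``$\frac{d}{dt}(Q_t)_{ij}=0$''. Your stated criterion --- that $t\mapsto Q_t$ be a well-defined function of time alone rather than of the realized path --- is vacuous in this setting: $(\rho_t,S_t)$ solves a deterministic ODE system from fixed initial data, so $Q_t=Q(\rho_t,S_t)$ is \emph{always} a deterministic function of $t$, and your criterion would never produce any constraint. Yet the condition \eqref{mar-con} is exactly the statement $d_tQ_{ij}=0$, i.e.\ that the rate matrix is \emph{constant} along the flow; that is time-homogeneity, a strictly stronger requirement than the rates being a function of $t$ alone. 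The paper adopts precisely this stronger reading: its proof starts from the bare assertion that Markovianity implies $d_tQ_{ij}=0$ (consistent with its later usage, e.g.\ Corollary~\ref{cor-sta}, where the solution process is taken to be time-homogeneous Markov), and everything else follows. To repair your argument you must either adopt that reading outright, or actually prove that the density-dependent feedback in the rates violates the Chapman--Kolmogorov/flow property unless the rates are constant --- the step you flag as delicate but do not carry out, and one that your ``function of time alone'' equivalence cannot deliver, since it conflates being nonrandom in $t$ with being independent of $t$.
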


\begin{proof}
Since $X_t$ is a Hamiltonian process, the transition matrix is determined by 
$\rho_tQ_t=\frac {\partial H}{\partial S}=d_t\rho_t.$
This implies that 
\begin{align*}
(\rho_t Q_t)_{i}=\sum_{j\in N(i)} \frac {\partial F_{ij}(\rho,S)}{\partial S_i}w_{ij}\rho_i
+\sum_{j\in N(i)} \frac {\partial F_{ji}(\rho,S)}{\partial S_i}w_{ji}\rho_j.
\end{align*}
Therefore, $(Q_t)_{ii}=\sum_{j\in N(i)} \frac {\partial F_{ij}(\rho,S)}{\partial S_i}w_{ij}$, $(Q_t)_{ij}=\frac {\partial F_{ij}(\rho,S)}{\partial S_j}w_{ij}.$
Since $X_t$ preserves the mass, it holds that $\sum_{j\in N(i)} (\frac {\partial F_{ij}(\rho,S)}{\partial S_i}+\frac {\partial F_{ij}(\rho,S)}{\partial S_j})w_{ij}=0$ for every $i\le N.$

Notice that $X_t$ is Markov implies that $d_tQ_{ij}=0,$ for $i,j\le N,$ that is
\begin{align*}
d_t \frac {\partial F_{ij}}{\partial S_i}=0, d_t\frac {\partial F_{ji}}{\partial S_j}=0.
\end{align*}
Direct calculation leads to 
\begin{align*}
d_t \frac {\partial F_{ij}}{\partial S_i}
&=\frac {\partial^2 F_{ij}}{\partial S_i\partial \rho_i}d_t\rho_i
+\frac {\partial^2 F_{ij}}{\partial S_i\partial \rho_j}d_t\rho_j\\
&+\frac {\partial^2 F_{ij}}{\partial S_i\partial S_i}d_tS_i
+\frac {\partial^2 F_{ij}}{\partial S_i\partial S_j}d_tS_j\\
&=\frac {\partial^2 F_{ij}}{\partial S_i\partial \rho_i}\Big(\sum_{l\in N(i)}\frac {\partial F_{il}}{\partial S_i}\rho_iw_{il}+\sum_{l\in N(i)}\frac {\partial F_{li}}{\partial S_i}\rho_lw_{li}\Big)\\
&+\frac {\partial^2 F_{ij}}{\partial S_i\partial \rho_j}\Big(\sum_{k\in N(j)}\frac {\partial F_{jk}}{\partial S_j}\rho_jw_{jk}+\sum_{k\in N(j)}\frac {\partial F_{kj}}{\partial S_j}\rho_lw_{kj}\Big)\\
&-\frac {\partial^2 F_{ij}}{\partial S_i\partial S_i}\Big(\sum_{l\in N(i)}\frac {\partial F_{il}}{\partial \rho_i}\rho_iw_{il}+\sum_{l\in N(i)}\frac {\partial F_{li}}{\partial \rho_i}\rho_lw_{li}+\sum_{l\in N(i)}(F_{il}w_{il}+F_{li}w_{li})\Big)
\\
&-\frac {\partial^2 F_{ij}}{\partial S_i\partial S_j}\Big(\sum_{k\in N(j)}\frac {\partial F_{jk}}{\partial \rho_j}\rho_jw_{jk}+\sum_{k\in N(j)}\frac {\partial F_{kj}}{\partial \rho_j}\rho_kw_{kj}+\sum_{k\in N(j)}(F_{jk}w_{jk}+F_{ki}w_{ki})\Big),
\end{align*}
which yields the desired result.
Conversely, if $(\rho,S)$ satisfies \eqref{mar-con}, the previous arguments leads to the equation of $\rho$ becomes a linear Master equation. Then there always exists a Markov process which is a stochastic representation of linear Master equation.  Meanwhile, it can be verified that this Markov process satisfies all the conditions in Definition \ref{df-hm-2} and is Hamiltonian.
\end{proof}

\begin{cor}
Given a Hamiltonian $\mathcal H(\rho,S)=\sum_{i=1}^N\sum_{j\in N(i)}F_{ij}(\rho,S)w_{ij}\rho_i$. Assume that there exists $(\rho^*,S^*(t))$ satisfies the following conditions,
\begin{enumerate} 
 \item $\sum_{j\in N(i)}\frac {\partial F_{ij}(\rho,S)}{\partial S_i}+\frac {\partial F_{ij}(\rho,S)}{\partial S_j}=0,$ 
 \item  $\rho^*$ is independent of $t$ and $(\rho^*,S^*(t))$ solves
 \begin{align*}
 &\sum_{l\in N(i)}\frac {\partial F_{il}}{\partial S_i}\rho_iw_{il}+\sum_{l\in N(i)}\frac {\partial F_{li}}{\partial S_i}\rho_lw_{li}=0,\\
 &\frac {\partial^2 F_{ij}}{\partial S_i\partial S_i}\Big(\sum_{l\in N(i)}\frac {\partial F_{il}}{\partial \rho_i}\rho_iw_{il}+\sum_{l\in N(i)}\frac {\partial F_{li}}{\partial \rho_i}\rho_lw_{li}+\sum_{l\in N(i)}(F_{il}w_{il}+F_{li}w_{li})\Big)
\\
&+\frac {\partial^2 F_{ij}}{\partial S_i\partial S_j}\Big(\sum_{k\in N(j)}\frac {\partial F_{jk}}{\partial \rho_j}\rho_jw_{jk}+\sum_{k\in N(j)}\frac {\partial F_{kj}}{\partial \rho_j}\rho_kw_{kj}+\sum_{k\in N(j)}(F_{jk}w_{jk}+F_{ki}w_{ki})\Big)=0
\end{align*}
 \end{enumerate}
Then there exists a Hamiltonian process which is Markov and preserves the mass. Furthermore, the Hamiltonian process is invariant with respect to $\rho^*.$ 

\end{cor}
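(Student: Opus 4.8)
The plan is to reduce the statement entirely to Theorem \ref{mar-ham} and its converse. First I would identify the meaning of the two hypotheses in terms of the objects appearing in the proof of that theorem. Recall that for a Markov Hamiltonian process with $\mathcal H(\rho,S)=\sum_{i}\sum_{j\in N(i)}F_{ij}(\rho,S)w_{ij}\rho_i$ the generator is $(Q_t)_{ij}=\frac{\partial F_{ij}}{\partial S_j}w_{ij}$ and $(Q_t)_{ii}=\sum_{j\in N(i)}\frac{\partial F_{ij}}{\partial S_i}w_{ij}$. Condition (1) is precisely the row-sum-zero requirement $\sum_{j}\big(\tfrac{\partial F_{ij}}{\partial S_i}+\tfrac{\partial F_{ij}}{\partial S_j}\big)w_{ij}=0$, i.e.\ mass preservation of $Q_t$; and the first equation in condition (2) is exactly $(\rho^* Q_t)_i=0$ for all $i$, which is consistent with $\rho^*$ being independent of $t$, so $\rho^*$ is a stationary density for $Q_t$.

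Next I would verify that $(\rho^*,S^*(t))$ satisfies the Markov compatibility system \eqref{mar-con}. The key observation, read off from the computation of $d_t\tfrac{\partial F_{ij}}{\partial S_i}$ in the proof of Theorem \ref{mar-ham}, is that in \eqref{mar-con} the bracket multiplying $\tfrac{\partial^2 F_{ij}}{\partial S_i\partial\rho_i}$ is $d_t\rho_i$, the bracket multiplying $\tfrac{\partial^2 F_{ij}}{\partial S_i\partial\rho_j}$ is $d_t\rho_j$, while the two brackets multiplying $\tfrac{\partial^2 F_{ij}}{\partial S_i\partial S_i}$ and $\tfrac{\partial^2 F_{ij}}{\partial S_i\partial S_j}$ are, up to sign, $d_tS_i$ and $d_tS_j$. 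Along $\rho=\rho^*$ the first equation of condition (2) forces $d_t\rho_i=d_t\rho_j=0$, so the first two terms of \eqref{mar-con} drop out; and the remaining two terms of \eqref{mar-con} are exactly minus the left-hand side of the second equation of condition (2), hence vanish as well. Thus \eqref{mar-con} holds identically along $(\rho^*,S^*(t))$.

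With \eqref{mar-con} established, the converse direction of Theorem \ref{mar-ham} applies verbatim: $d_t(Q_t)_{ij}=0$, so the density equation $d_t\rho=\rho Q$ is a genuine linear master equation with time-independent generator, and there is a (time-homogeneous) Markov process $X_t$ with this generator which is a Hamiltonian process in the sense of Definition \ref{df-hm-2}; mass preservation is condition (1). Finally, since $\rho^*$ is independent of $t$ and solves $\rho^* Q=0$, the law of $X_t$ started from $\rho^*$ equals $\rho^*$ for all $t\ge 0$, which is the asserted invariance.

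I expect the only real subtlety to be the same one already flagged in Remark \ref{rk-ot}: the converse of Theorem \ref{mar-ham} silently needs the off-diagonal entries $(Q_t)_{ij}=\tfrac{\partial F_{ij}}{\partial S_j}w_{ij}$ to be non-negative for an honest Markov process to exist, and this positivity is not contained in conditions (1)--(2); I would either treat it as implicit in ``there exists a Markov process'' or add the hypothesis $\tfrac{\partial F_{ij}}{\partial S_j}\ge 0$ on the relevant region. Everything else is the bookkeeping check that the bracketed sums in \eqref{mar-con} match those in condition (2), which is routine.
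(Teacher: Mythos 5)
Your proposal is correct and is essentially the argument the paper intends: the corollary is stated as a direct consequence of Theorem \ref{mar-ham}, and your bookkeeping (condition (1) is mass preservation of $Q_t$, the first equation of condition (2) kills the $d_t\rho$ terms in \eqref{mar-con}, and the second equation is exactly the surviving $d_tS$ terms, so the converse of Theorem \ref{mar-ham} applies and stationarity of $\rho^*$ gives invariance) matches what the paper leaves implicit. Your remark about the missing non-negativity of the off-diagonal entries of $Q_t$ is a fair observation about a gap in the paper's own hypotheses rather than in your argument.
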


\section{Hamiltonian process via discrete SBP on graphs}
Although the SBP \cite{Sch31} has a history close to 100 years, it has received revived attention from control theory
and machine learning communities recently, see \cite{Leo14,Pav03}.
For convenience, the background of continuous SBP is presented in the appendix.

For the discrete counterpart of SBP  on graph,
roughly speaking, there are two different treatments reported by existing references. 
\begin{itemize}
\item One is to consider a reference path measure (induced by a reversible random walk) on the graph and then study the optimization problem involving the relative entropy between the reference measure and the path measure with given initial and terminal distributions \cite{Leo14, Leo16}.)  
In this framework, the reference random walk is often related to a discrete version of \eqref{heat} (For example, the linear discretization of the Laplacian gives the time homogenous Markov chain as the reference in \cite{CGPT17}).
\item Another way is proposed by the discrete version of \eqref{schro1} or \eqref{schro0} directly \cite{chow20discrete}. 
\end{itemize}
We shall show that different treatments create differences on the structure and formulation of equations, in particular the discrete Laplacian operator. Each of these formulations can determine its corresponding Hamiltonian process on graph.

\subsection{Discrete SBP based on relative entropy and reference Markov measure}

 In the following discussion, we always assume that $w_{ij}=1$ if $ij\in E$ for conciseness of formulations. By using the discrete Girsanov theorem on graph, the discrete SBP in the form of relative entropy (A) becomes the following control problem 
\begin{align} 
  & \min_{\widehat{m}^t\geq 0}  \left\{ \int_0^1 \sum_{i\in V} \rho(i,t)  \sum_{j\in N(i)} \left(\frac{\widehat{m}^t_{ij}}{m_{ij}^t}\log\left(\frac{\widehat{m}^t_{ij}}{m_{ij}^t}\right) -\frac{\widehat{m}^t_{ij}}{m_{ij}^t} + 1\right)m_{ij}^t ~dt \right\}  \label{SBP_KL}\\
  & \textrm{subject to:} \quad \frac{d}{dt} \rho(i,t) = \sum_{j\in N(i)} \widehat{m}^t_{ji}\rho_j - \widehat{m}^t_{ij}\rho_i \quad \rho(\cdot,0)=\rho^0, ~\rho(\cdot,1)=\rho^1. \nonumber   
\end{align}
where the reference measure $R$ is determined by the master equation $d_t\widetilde \rho_t=\sum_{j\in N(i)} {m}^t_{ji}\widetilde \rho_j - {m}^t_{ij}\widetilde \rho_i $. We provide a detailed derivation of \eqref{SBP_KL} in the appendix.
Let us denote $u(x) = x\log x-x+1$.
By introducing Lagrange multiplier $\psi$, we obtain the following Lagrangian functional
\begin{align*} 
  & \mathcal{L}(\rho,\widehat{m},\psi) = \int_0^1 \sum_{i\in V}\rho(i,t)  \sum_{j\in N(i)} u\left(\frac{\widehat{m}^t_{ij}}{m_{ij}^t}\right) m_{ij}^t~dt \\ & +\int_0^1 \sum_{i\in V} - \rho(i,t)\frac{\partial}{\partial t}\psi(i,t) -\psi(i,t) \left(\sum_{j\in N(i)} \widehat{m}^t_{ji}\rho_j - \widehat{m}^t_{ij}\rho_i\right) ~dt \\
  = & \int_0^1 - \sum_{i\in V}  \rho(i,t)\frac{\partial}{\partial t}\psi(i,t) -\frac 12\sum_{(i,j)\in E}\left[\frac{\widehat{m}_{ji}}{m_{ji}}(\psi(i,t)-\psi(j,t))-u(\frac{\widehat{m}_{ji}}{m_{ji}})\right]m_{ji}\rho(j,t)\\
   &+\left[\frac{\widehat{m}_{ij}}{m_{ij}}(\psi(j,t)-\psi(i,t))-u(\frac{\widehat{m}_{ij}}{m_{ij}})\right]m_{ij}\rho(i,t) ~dt.
\end{align*}
When solving the above saddle point problem, we minimize over $\widehat{m}$ and get 
\begin{align*}
  &\int_0^1- \sum_{i\in V}  \rho(i,t)\frac{\partial}{\partial t}\psi(i,t) -\frac 12\sum_{(i,j)\in E}[u^*(\psi(i,t)-\psi(j,t))m_{ji}\rho(j,t) \\
  &\quad + u^*(\psi(j,t)-\psi(i,t))m_{ij}\rho(i,t)]~dt.
\end{align*}
Here $u^*$ is the Legendre dual of $u$: $u^*(x) = \sup_y\left\{x\cdot y-u(y)\right\}$, leading to $u^*(x)=e^x-1$.
By formulating the Lagrangian, we can identify the Hamiltonian of this control problem, which can be written as:
\begin{equation}
  \mathcal{H}(\rho,\psi) = \sum_{i\in V}\sum_{j\in N(i)} \exp(\psi(j,t)-\psi(i,t))m_{ij}\rho(i,t).       \label{hamiltonian SBP}
\end{equation}
Then the above control problem implies the following Hamiltonian system
\begin{equation*}
  \partial_t \rho = \frac{\partial \mathcal H(\rho,\psi)}{\partial\psi}, \quad \partial_t\psi = -\frac{\partial \mathcal H(\rho,\psi)}{\partial\rho},
\end{equation*}
that is,
\begin{align}
 & \frac{\partial }{\partial t} \rho(i,t) = \sum_{j\in N(i)} -e^{\psi(j,t)-\psi(i,t)}m_{ij}\rho(i,t) +e^{\psi(i,t)-\psi(j,t)}m_{ji}\rho(j,t), \label{ham-schr1} \\
 & \frac{\partial }{\partial t} \psi(i,t) = -\sum_{j\in N(i)}(e^{\psi(j,t)-\psi(i,t)}-1)m_{ij}.\nonumber
\end{align}
By using the Hopf-Cole transform, we can further verify that the discrete SBP problem determines a Hamiltonian process on the graph. 
Let us consider the following transform $\tau:T^*\mathcal{P}(G)\rightarrow T^*\mathcal{P}(G)$ as:
\begin{equation}
  \tau[(\rho,\psi)] = (\rho, \ln(\psi)-\frac{1}{2}\ln\rho)  \quad \label{symplectic transf as hopfcole}
\end{equation}
Let us denote  $g'(\rho,\psi) = D\tau(\rho,\psi)$.
Then the symplectic form $\omega$ is unchanged in the sense that 
\begin{align*}
 \omega_{g(\rho,\psi)}(g'(\rho,\psi)\xi, g'(\rho,\psi)\eta)=\omega_{(\rho,\psi)}(\xi,\eta),
\end{align*}
where $(\xi,\eta)\in T_{(\rho,\psi)} T^*\mathcal P(G).$
By using the symplectic submersion from $\mathcal P(G)$ to $\mathbb R^N$, the symplectic form can be represented by $(g'(\rho,\psi)\xi)^TJg'(\rho,\psi)\eta=\xi^TJ\eta,$ where $J$ is the standard symplectic matrix. 
Since $d_t\tau(\rho,\psi)^T= \tau' d_t(\rho, \psi)^T$ and that $(\tau')^TJ \tau'=J$, we conclude that the Hopf--Cole transformation \eqref{symplectic transf as hopfcole} is a symplectic transformation on the cotangent bundle of the density manifold. 
Denote $(\rho,S)$ as the new coordinate. Then $\{\rho_t,S_t\}$ satisfies the following Hamiltonian system:
\begin{align*}
 &\frac{\partial\rho(i,t)}{\partial t} = \frac{\partial \tilde{\mathcal H}(\rho,S)}{\partial S} \\
 & \frac{\partial S(i,t)}{\partial t} =  - \frac{\partial \tilde{\mathcal H}(\rho,S)}{\partial \rho}
\end{align*}
with 
\begin{equation}
  \tilde{\mathcal{H}}(\rho,S) = \mathcal{H}(\tau^{-1}(\rho, S))
 = \sum_{i\in V}\sum_{j\in N(i)} e^{(S_j-S_i)} m_{ij}\sqrt{\rho_i\rho_j},
\end{equation}
that is
\begin{align}
\frac{\partial S(i,t)}{\partial t}
&=-m_{ii}-\frac 12 \sum_{j\in N(i)}e^{S_j-S_i}m_{ij}\frac {\sqrt{\rho_j}}{\sqrt{\rho_i}}-\frac 12\sum_{j\in N(i)}e^{S_i-S_j}m_{ji}\frac {\sqrt{\rho_j}}{\sqrt{\rho_i}},\label{ham-schr}\\
\frac{\partial \rho(i,t)}{\partial t}&=\sum_{j\in N(i)}e^{S_i-S_j}m_{ji}\sqrt{\rho_j}\sqrt{\rho_i}-\sum_{j\in N(i)}e^{S_j-S_i}m_{ij}\sqrt{\rho_i}\sqrt{\rho_j}.\nonumber
\end{align}

As a consequence, we verify that, as reported in \cite{Leo14} 
the discrete SBP corresponds to a Hamiltonian process with the transition rate matrix $Q$ $(Q_{ij}=\widehat m_{ij})$ defined by $Q_{ii}=-\sum_{j\in N(i)}e^{S_j-S_i}\frac{\sqrt{\rho_j}}{\sqrt{\rho_i}} m_{ij},$ $ Q_{ij}=e^{S_i-S_j}\frac{\sqrt{\rho_i}}{\sqrt{\rho_j}}m_{ji}$ if $ij\in E$. 

Using the above procedures, we can naturally extend the original SBP problem to the following generalized control problem

\begin{align}\label{gen-sch}
  & \min_{\hat{m}^t\geq 0}  \left\{ \int_0^1 \sum_{i\in V} \rho(i,t)  \sum_{j\in N(i)} u\left(\frac{\hat{m}^t_{ij}}{m^t_{ij}}\right)m_{ij}^t ~dt \right\}  \\
  & \textrm{subject to:} \quad \frac{d}{dt} \rho(i,t) = \sum_{j\in N(i)} \hat{m}^t_{ji}\rho_j - \hat{m}^t_{ij}\rho_i \quad \rho(\cdot,0)=\rho^0, ~\rho(\cdot,1)=\rho^1.  \nonumber 
\end{align}

Here $u$ is an arbitrary convex function. Then the Hamiltonian associated  with this general control problem is
\begin{equation}
  \mathcal{H}(\rho,\psi) = \sum_{i\in V} \sum_{j\in N(i)} u^*(\psi(j,t)-\psi(i,t))m_{ij}\rho(i,t), \label{hamiltonian general}
\end{equation}
where $\lambda_{ij}=(u')^{-1}(\psi_j-\psi_i).$

For the sake of completeness of our discussion, we also reveal the relations among the so-called Schr\"{o}dinger system \cite{CLZ20, conforti2017extremal, Blaquire1992ControllabilityOA} and our derived systems \eqref{ham-schr1} and \eqref{ham-schr}. All three PDE systems are derived from the SBP. We introduce the Madelung Transform  $ \phi:T^*\mathcal{P}(G)\rightarrow T^*\mathcal{P}(G)$
\begin{equation}
  (f,g) = \phi(\rho, S) = (\sqrt{\rho}e^{-S}, \sqrt{\rho}e^S) \label{Madelung1}
\end{equation}
Or equivalently,
\begin{equation}
 (f,g)  = \tilde{\phi}(\rho, \psi) = (\rho e^{-\psi}, e^\psi)\label{Madelung2}
\end{equation}
Combining \eqref{Madelung1} with \eqref{ham-schr}, or combining \eqref{Madelung2} with \eqref{ham-schr1} yields the Schr\"{o}dinger system:
\begin{align}
  & \frac{\partial}{\partial t} f(i,t) = \sum_{j\in N(i)} (f(j,t)-f(i,t))m_{ij}^t, \label{ham-schr3} \\
  & \frac{\partial}{\partial t }g(i,t) = -\sum_{j\in N(i)} (g(j,t)-g(i,t))m_{ij}^t.\nonumber
\end{align}
Similar to our previous analysis, we can verify that both transforms $\phi$ and $\tilde{\phi}$ preserves the symplectic form. And we know that \eqref{ham-schr3} is a Hamiltonian system and its corresponding Hamiltonian is
\begin{equation*}
  \widehat{\mathcal{H}}(f,g) = \sum_{i\in V}\sum_{j\in N(i)} f_ig_jm^t_{ij}.
\end{equation*}

By applying the Theorem \ref{mar-ham}, we obtain the following result about the conditions under which the Hamiltonian process in SBP enjoys the stationary measure and Markov property.

\begin{prop}\label{prop-sta}
Assume that the reference process is mass-preserving, i.e.,  {$\sum_{i}\widetilde \rho(i,t)=\sum_{i}\widetilde \rho^0(i)$}, and possesses a stationary measure $\rho^*.$
Then there exists a stationary point $(\rho^*,S^*)$ of the Hamiltonian system \eqref{ham-schr} on the density manifold.
 \end{prop}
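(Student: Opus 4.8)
The plan is to exhibit an explicit equilibrium of \eqref{ham-schr} and read off the required pair; this is also equivalent to checking the hypotheses of the corollary to Theorem \ref{mar-ham} for the SBP Hamiltonian $\widetilde{\mathcal H}(\rho,S)=\sum_{i\in V}\sum_{j\in N(i)}e^{S_j-S_i}m^t_{ij}\sqrt{\rho_i\rho_j}$. Under the standing assumption that $G$ is connected with $m^t_{ij}>0$ on edges, the reference chain is irreducible, so the stationary measure $\rho^*$ supplied by the hypothesis is strictly positive, $\rho^*\in\mathcal P_o(G)$, and satisfies $\sum_{j\in N(i)}m^t_{ji}\rho^*_j=\rho^*_i\sum_{j\in N(i)}m^t_{ij}$ for every $i\in V$; mass-preservation of the reference says the rows of its generator sum to zero, i.e.\ $-m^t_{ii}=\sum_{j\in N(i)}m^t_{ij}$. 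I would then take
\begin{equation*}
 (\rho^*,\,S^*),\qquad S^*_i:=-\tfrac12\ln\rho^*_i\quad(i\in V).
\end{equation*}
The choice of $S^*$ is pinned down by the Madelung transform \eqref{Madelung1}: it is the unique potential for which $(f^*,g^*)=\phi(\rho^*,S^*)=(\rho^*,\mathbf 1)$, the state whose density is the reference's stationary measure and whose dual variable is the constant function $\mathbf 1=(1,\dots,1)$ --- the discrete analogue of a standing solution, with $f^*$ annihilated by the forward reference operator and $g^*$ by its adjoint.

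The verification would then be a direct substitution into \eqref{ham-schr}. Since $e^{S^*_i-S^*_j}=\sqrt{\rho^*_j/\rho^*_i}$, in the $\rho$-equation all the square roots cancel and $\partial_t\rho^*_i$ collapses to $\sum_{j\in N(i)}m^t_{ji}\rho^*_j-\rho^*_i\sum_{j\in N(i)}m^t_{ij}$, which is $0$ by stationarity of $\rho^*$. In the $S$-equation the weight $e^{S^*_j-S^*_i}\sqrt{\rho^*_j}/\sqrt{\rho^*_i}$ equals $1$ and $e^{S^*_i-S^*_j}\sqrt{\rho^*_j}/\sqrt{\rho^*_i}$ equals $\rho^*_j/\rho^*_i$, so $\partial_t S^*_i$ becomes $-m^t_{ii}-\tfrac12\sum_{j\in N(i)}m^t_{ij}-\tfrac1{2\rho^*_i}\sum_{j\in N(i)}m^t_{ji}\rho^*_j$; using $-m^t_{ii}=\sum_j m^t_{ij}$ (mass-preservation) and then $\sum_j m^t_{ji}\rho^*_j=\rho^*_i\sum_j m^t_{ij}$ (stationarity) this reduces to $\sum_j m^t_{ij}-\tfrac12\sum_j m^t_{ij}-\tfrac12\sum_j m^t_{ij}=0$. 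Hence $(\rho^*,S^*)$ is a genuine fixed point of \eqref{ham-schr}. Equivalently, one may note that $(f^*,g^*)=(\rho^*,\mathbf 1)$ is visibly an equilibrium of the linear Schr\"odinger system \eqref{ham-schr3} and transport this back through the symplectomorphism \eqref{Madelung1}.

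The computation itself is routine; the two points that need care are (i) producing the candidate --- the logarithmic potential $S^*_i=-\tfrac12\ln\rho^*_i$, rather than a constant, is forced because the reference need not be symmetric, and it is the only choice making $g^*$ constant --- and (ii) the positivity $\rho^*\in\mathcal P_o(G)$, without which $S^*$ is not well defined, which is exactly where connectedness/irreducibility of the reference enters. A further caveat in the genuinely time-inhomogeneous case is that $\rho^*$ must be a common stationary measure of $m^t$ for all $t$; I would fold this into the meaning of ``possesses a stationary measure,'' after which the argument is unchanged. I expect (i) to be the only substantive step.
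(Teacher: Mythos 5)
Your proof is correct and is essentially the paper's own argument: the paper also takes the ansatz $e^{S_i^*}\sqrt{\rho_i^*}=\mathrm{const}$ (i.e.\ $S_i^*=-\tfrac12\ln\rho_i^*$ up to an additive constant) and reduces both equations of \eqref{ham-schr} to the stationarity condition $\sum_{j\in N(i)}m_{ji}\rho_j^*+m_{ii}\rho_i^*=0$ of the reference process, just organized in the reverse direction (necessary conditions first, ansatz second). Your added remarks on positivity of $\rho^*$ and on the time-inhomogeneous case are sensible refinements the paper leaves implicit.
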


\begin{proof}
Take $\frac {\partial \tilde H} {\partial S}=0$ and $\frac {\partial \tilde H}{\partial \rho}=0$ such that $(\rho,S)$ is independent of time. 
The equation of $\rho$ leads to 
\begin{align*}
\sum_{j\in N(i)}e^{S_i-S_j}m_{ji}\sqrt{\rho_j}=\sum_{j\in N(i)}e^{S_j-S_i}m_{ij}\sqrt{\rho_j}.
\end{align*}
Due to $m_{ii}=-\sum_{j\in N(i)}m_{ij},$
the equation of $S$ becomes
\begin{align*}
\frac 12\sum_{j\in N(i)}(e^{S_i-S_j}m_{ji}+e^{S_j-S_i}m_{ij})\sqrt{\rho_j}=\sum_{j\in N(i)}{m_{ij}}\sqrt{\rho_i}.
\end{align*}
Applying  the above relationships, we obtain that
\begin{align*}
\sum_{j\in N(i)}e^{S_i-S_j}m_{ji}\sqrt{\rho_j}=\sum_{j\in N(i)}{m_{ij}}\sqrt{\rho_i}.
\end{align*}
This immediately implies that 
\begin{align*}
\sum_{j\in N(i)}e^{-S_j}m_{ji}\sqrt{\rho_j}=\sum_{j\in N(i)}e^{-S_i}{m_{ij}}\sqrt{\rho_i}.
\end{align*}
Now by taking $e^{S_j^*}\sqrt{\rho_j^*}=e^{S_i^*}\sqrt{\rho_i^*}$ for all $ij\in E,$ the first equation is reduced to 
\begin{align*}
\sum_{j\in N(i)}m_{ji}\rho_j^*=\sum_{j\in N(i)}m_{ij}\rho_i^*.
\end{align*}
 This leads to 
\begin{equation*}
\sum_{j\in N(i)} m_{ji}\rho^*_j + m_{ii}\rho^*_i = 0,
\end{equation*}
which is the sufficient and necessary condition that the reference process admits the stationary measure $\rho^*.$ From the above arguments, there always exists a stationary point $(\rho^*, S^*)$ which refers to the reference process itself and $\rho^0=\rho^1=\rho^*$ in the SBP.
\end{proof}

\begin{cor}\label{cor-sta}
Assume that the reference process is mass-preserving and that
there exists a  solution process of the SBP which is Markov.
Then for all $ij\in E$, $c_{ij}=\frac {e^{S_i}\sqrt{\rho_i}}{e^{S_j}\sqrt{\rho_j}}$ is the solution of 
\begin{align*}
-\sum_{k\in N(i)}c_{ki}m_{ik}+\sum_{l\in N(j)}c_{lj}m_{jl}-m_{jj}+m_{ii}=0.
\end{align*}
Meanwhile, $\rho$ is the invariant measure of the solution process in SBP.
\end{cor}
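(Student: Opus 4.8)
The plan is to read off the transition rate matrix of the SBP Hamiltonian process from \eqref{ham-schr}, translate the (time--homogeneous) Markov property into the statement that the ratios $c_{ij}$ are constant in $t$, and then extract the claimed relation by differentiating $\log c_{ij}$ along the flow. Write $a_i(t):=e^{S_i(t)}\sqrt{\rho_i(t)}$, so that $c_{ij}=a_i/a_j$, and recall from the discussion preceding the statement that the solution process carries the transition rate matrix
\[
Q_{ij}=e^{S_i-S_j}\frac{\sqrt{\rho_i}}{\sqrt{\rho_j}}\,m_{ji}=c_{ij}m_{ji}\ \ (ij\in E),\qquad Q_{ii}=-\sum_{j\in N(i)}e^{S_j-S_i}\frac{\sqrt{\rho_j}}{\sqrt{\rho_i}}\,m_{ij}=-\sum_{j\in N(i)}c_{ji}m_{ij}.
\]
Since the reference process is time--homogeneous and $m_{ji}>0$ on each edge, the Markov property of $X_t$ --- which, exactly as in the proof of Theorem~\ref{mar-ham}, means $d_tQ_{ij}=0$ --- is equivalent to $d_tc_{ij}=0$ for every $ij\in E$.

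The core step is a short computation with \eqref{ham-schr}. I would compute $\frac{d}{dt}\log a_i=\dot S_i+\frac{1}{2\rho_i}\dot\rho_i$ and observe that the ``mixed'' contributions, namely those involving $e^{S_i-S_j}m_{ji}\sqrt{\rho_j}/\sqrt{\rho_i}$, cancel between $\dot S_i$ and $\frac{1}{2\rho_i}\dot\rho_i$, leaving
\[
\frac{d}{dt}\log a_i=-m_{ii}-\sum_{j\in N(i)}e^{S_j-S_i}\frac{\sqrt{\rho_j}}{\sqrt{\rho_i}}\,m_{ij}=-m_{ii}-\sum_{j\in N(i)}c_{ji}m_{ij}=Q_{ii}-m_{ii}.
\]
Then, since $c_{ij}$ is constant, $0=\frac{d}{dt}\log c_{ij}=\frac{d}{dt}\log a_i-\frac{d}{dt}\log a_j=(Q_{ii}-m_{ii})-(Q_{jj}-m_{jj})$; substituting the expressions above for $Q_{ii}$ and $Q_{jj}$ in terms of the $c$'s gives exactly the announced linear system for $(c_{ij})_{ij\in E}$. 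The only thing needing care here is the index bookkeeping in the cancellation and keeping the diagonal terms $m_{ii},m_{jj}$ straight; everything else is routine differentiation and is consistent with condition \eqref{mar-con} of Theorem~\ref{mar-ham} specialized to $F_{ij}=e^{S_j-S_i}m_{ij}\sqrt{\rho_j/\rho_i}$.

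Finally, for the invariant--measure claim, note that once every $c_{ij}$ is constant the density equation in \eqref{ham-schr} collapses to the linear master equation $\dot\rho_i=\sum_{j\in N(i)}c_{ij}m_{ji}\rho_j-\sum_{j\in N(i)}c_{ji}m_{ij}\rho_i=(\rho Q)_i$ with time--independent $Q$. Using the Schr\"odinger system \eqref{ham-schr3}, whose solutions $f_t=e^{tm}f_0$, $g_t=e^{-tm}g_0$ reconstruct the process through $\rho=fg$ and $e^{S}\sqrt{\rho}=g$, constancy of the ratios $c_{ij}=g_i/g_j$ along the connected graph $G$ forces $g_t$ to remain proportional to $g_0$, hence $g_0$ to be a positive eigenvector of $m$; by Perron--Frobenius (using connectedness) this eigenvector is the constant one, so $g_t$ is time--independent, $Q$ coincides with the reference generator, and $\rho_0=\rho_1$ must be the stationary distribution $\rho^*$ of the reference. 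Then $\rho\equiv\rho^*$ satisfies $\rho Q=0$, i.e.\ $\rho$ is the invariant measure of the solution process, which refines the stationary picture of Proposition~\ref{prop-sta}. I expect this last paragraph to hold the real subtlety: the Markov (time--homogeneity) assumption is quite rigid rather than innocuous, and the delicate point is excluding spurious nonconstant eigenvectors of $m$ --- the rest is differentiation and linear algebra.
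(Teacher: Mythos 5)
Your treatment of the first claim is essentially the paper's own argument: you read off $Q$ from the discussion after \eqref{ham-schr}, translate time-homogeneity into $d_tc_{ij}=0$, and differentiate $\log c_{ij}=\log a_i-\log a_j$ along the flow, which is exactly the paper's computation in the $\psi$-coordinates of \eqref{ham-schr1} (note $\log a_i=\psi_i$). The cancellation you describe is correct, but carried to the end it gives $\frac{d}{dt}\log a_i=-m_{ii}-\sum_{k\in N(i)}c_{ki}m_{ik}$ and hence $-\sum_{k\in N(i)}c_{ki}m_{ik}+\sum_{l\in N(j)}c_{lj}m_{jl}+m_{jj}-m_{ii}=0$: the diagonal terms come out with the \emph{opposite} sign to the one displayed in the corollary and in \eqref{con-cij}. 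You claim to land on ``exactly the announced linear system,'' which you do not; the discrepancy appears to originate in the paper's formula, but you should flag it rather than absorb it silently into ``index bookkeeping.''

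The real gap is in your second half, which takes a genuinely different route from the paper. The paper closes the invariance claim by combining $d(S_i-S_j)=\tfrac12 d\ln\rho_j-\tfrac12 d\ln\rho_i$ with the $S$-equation of \eqref{ham-schr} to extract $d_t\rho_i=d_t\ln\rho_j$ on each edge and then rules out $\rho_i\rho_j=1$ by mass conservation. You instead pass to the Schr\"odinger system \eqref{ham-schr3}: constant ratios $g_i/g_j$ on a connected graph give $g_t=\lambda(t)g_0$, so $g_0$ is a positive eigenvector of the generator and hence (by the maximum principle, assuming irreducibility, i.e. $m_{ij}>0$ on edges --- a hypothesis you use but do not state) constant. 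Up to there your argument is clean, and arguably cleaner than the paper's. But the final inference --- ``$Q$ coincides with the reference generator, and $\rho_0=\rho_1$ must be the stationary distribution $\rho^*$'' --- is a non sequitur: once $g\equiv\mathrm{const}$ the solution process is simply the reference process restarted at $\rho^0$, and nothing you have proved forces $\rho^0$ to equal $\rho^*$. Concretely, on a two-node graph with $m_{12}=m_{21}=1$, take $\rho^0=(0.9,0.1)$ and $\rho^1$ the reference marginal at time $1$; the SBP minimizer is the reference process itself, it is time-homogeneous Markov, $c_{ij}\equiv 1$ and $g\equiv 1$, yet $\rho_t$ is not invariant. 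So your argument does not establish the invariance claim; you would need an additional step excluding non-stationary initial data (the step the paper attempts, by a quite different manipulation, in deriving $d_t\rho_i=d_t\ln\rho_j$), or an extra hypothesis on $(\rho^0,\rho^1)$.
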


\begin{proof}
Consider the stochastic process in SBP is time homogenous Markov. We can verify that $\frac {e^{S_i}\sqrt{\rho_i}}{e^{S_j}\sqrt{\rho_j}}=c_{ij}>0$ is independent of time and that 
\begin{align*}
d_t\rho=\rho Q,
\end{align*}
where $Q_{ii}=-\sum_{j\in N(i)}{c_{ji}}m_{ij},$ $Q_{ij}=c_{ji} m_{ij}.$
Let $e^{\psi_i}=e^{S_i}\sqrt{\rho_i}$. Then it holds that 
\begin{align*}
 & \frac{\partial }{\partial t} \rho(i,t) = \sum_{j\in N(i)} -e^{\psi(j,t)-\psi(i,t)}m_{ij}\rho(i,t) +e^{\psi(i,t)-\psi(j,t)}m_{ji}\rho(j,t)\\
 & \frac{\partial }{\partial t} \psi(i,t) = -\sum_{j\in N(i)}(e^{\psi(j,t)-\psi(i,t)}-1)m_{ij}.
\end{align*}
As a consequence, for $ij\in E,$ 
\begin{align}\label{con-cij}
 d_t c_{ij}&=d_t [(e^{\psi_i-\psi_j})]\\\nonumber
 &=c_{ij}(-\sum_{l\in N(i)}e^{\psi_l-\psi_i}m_{il}+\sum_{k\in N(j)}e^{\psi_k-\psi_j}m_{jk}) 
 +c_{ij}(-m_{jj}+m_{ii})\\\nonumber
 &=c_{ij}(-\sum_{l\in N(i)}c_{li}m_{il}+\sum_{k\in N(j)}c_{kj}m_{jk}-m_{jj}+m_{ii})=0. 
\end{align}
We complete the proof for the first desired result due to $c_{ij}>0$ for $ij\in E.$

Notice that $e^{S_i-S_j}=\frac {\sqrt{\rho_j}}{\sqrt{\rho_i}}c_{ij}$ leads to 
\begin{align*}
d(S_i-S_j)=\frac 12 \frac {d\rho_j}{\rho_j}-\frac 12 \frac{d\rho_i}{\rho_i}+d \ln(c_{ij})=\frac 12 \frac {d\rho_j}{\rho_j}-\frac 12 \frac{d\rho_i}{\rho_i}.
\end{align*}
This implies that 
\begin{align*}
&-m_{ii}-\frac 12\sum_{k\in N(i)}e^{S_k-S_i}m_{ik}\frac {\sqrt{\rho_k}}{\sqrt{\rho_i}}-\frac 12\sum_{k\in N(i)}e^{S_i-S_k}m_{ki}\frac {\sqrt{\rho_k}}{\sqrt{\rho_i}}\\
&+m_{jj}+\frac 12 \sum_{l\in N(j)}e^{S_l-S_j}m_{jl}\frac {\sqrt{\rho_l}}{\sqrt{\rho_j}}+\frac 12 \sum_{l\in N(j)}e^{S_j-S_l}m_{lj}\frac {\sqrt{\rho_l}}{\sqrt{\rho_j}}\\
&=  -\frac 1{2\rho_i}(\sum_{k\in N(i)}e^{S_i-S_k}m_{ki}\sqrt{\rho_i\rho_k}
-\sum_{k\in N(i)}e^{S_k-S_i}m_{ik}\sqrt{\rho_i\rho_k}
)\\
&+ \frac 1{2\rho_j}(\sum_{l\in N(j)}e^{S_j-S_l}m_{lj}\sqrt{\rho_j\rho_l}
-\sum_{l\in N(j)}e^{S_l-S_j}m_{jl}\sqrt{\rho_j\rho_l}
),
\end{align*}
which is equivalent to
\begin{align*}
-\sum_{k\in N(i)}c_{ki}m_{ik}+\sum_{l\in N(j)}c_{lj}m_{jl}-m_{jj}+m_{ii}=0.
\end{align*}
Using \eqref{con-cij}, it yields that 
\begin{align*}
\sum_{k\in N(i)} c_{ik}m_{ki}\rho(k,t) - c_{ki}m_{ik}\rho(i,t)
= \frac 1{\rho_j}\Big(\sum_{l\in N(j)} c_{jl}m_{lj}\rho(l,t) - c_{lj}m_{jl}\rho(j,t)\Big),
\end{align*}
that is, 
\begin{align*}
d_t\rho_i=d_t \ln(\rho_j).
\end{align*}
Similarly, we have $d_t\rho_j=d_t \ln(\rho_i),$ which implies that 
\begin{align*}
d_t\rho_i=\rho_i\rho_j d_t\rho_i. 
\end{align*}
If there exists $d_t\rho_i>0,$ then we have that $\rho_i\rho_j=1.$ However, this contradicts the mass conservation $\sum_{i}\rho_i=1.$ 
It follows that $d_t\rho_i=0$, and therefore $\rho$ should be invariant with respect to time. 
We conclude that $\rho$ must be the invariant measure of the solution process in the SBP.

\end{proof}

\subsection{Discrete SBP based on minimum action with Fisher information}

{ Another way (B) to describe the discrete SBP (see e.g. \cite{chow20discrete}) lies on the discretization of the variational problem \eqref{schro0}.} {Consider the following control problem by directly discretizing the Fisher information $I(\rho)$ in \eqref{schro0}: }
 \begin{align}
 &J_1=\min_{\rho,v} \Big\{\int_{0}^1 (\frac 12\<v,v\>_{\rho}+\frac 18 I(\rho))dt +\frac 18\sum_{i} (\rho^1(i)\log(\rho^1(i))-\rho^{0}\log(\rho^0(i)))\Big\}, \label{SBP_Fisher}    \\
 & \textrm{where $\rho_i \in H^1((0,1))$, $v_{ij}\in L^2((0,1);\theta_{ij}(\rho))$ and} \nonumber \\
 & d_t\rho_t=\rho_t Q_t=-div_G(\rho_t v_t) \nonumber
 \end{align}
 with $\rho^0,\rho^1\in \mathcal P_o(G).$ In this case, we look for a stochastic process which obeys the above master equation and minimize the action with the Fisher information $I(\rho):=\frac 12\sum_{ij\in E} (\log(\rho_i)-\log(\rho_j))^2\widetilde \theta_{ij}(\rho)$. {Here $\widetilde \theta$ is another density dependent weight which may be different from earlier defined $\theta$ on the graph G}. 
 
  By using Lagrangian multiplier method, the critical point of the discrete variational approach should satisfies
\begin{align}
&v_{ij}(t)=(S_i(t)-S_j(t)),\nonumber\\
&d_t\rho_i-\sum_{j\in N(i)}(S_i-S_j)\theta_{ij}(\rho)=0, \label{ham-schrB-1} \\
&d_tS_i+\frac 12\sum_{j\in N(i)}(S_i-S_j)^2\frac{\partial \theta_{ij}}{\partial \rho_i}=\frac 18\frac{\partial}{\partial \rho_i} I(\rho).\nonumber
\end{align}
It forms a Hamiltonian system on the density space with the Hamiltonian $\frac 14\sum_{i,j}(S_i-S_j)^2\theta_{ij}(\rho)-\frac 18 I(\rho).$ In other words, the critical point gives a Hamiltonian process on the graph.

We can also reformulate the above system \eqref{ham-schrB-1} in the form of Schr\"{o}dinger system \eqref{heat}.  By taking differential on $f=\sqrt{\rho}e^{ S}$ and $g=\sqrt{\rho}e^{-S},$
we get 
\begin{align*}
&d_tf=e^{(\frac 12 \log(\rho)+S)}(\frac 12 \frac {d_t\rho}{\rho}+{d_tS})\\
&=e^{(\frac 12 \log(\rho)+S)}\Big(\frac 12 \frac {\sum_{j\in N(i)}w_{ij}(S_i-S_j)\theta_{ij}(\rho)}{\rho}-\frac 12\sum_{j\in N(i)}w_{ij}(S_i-S_j)^2\frac{\partial \theta_{ij}}{\partial \rho_i}+\frac 18 \frac{\partial}{\partial \rho_i} I(\rho) \Big),\\
&d_tg=e^{(\frac 12 \log(\rho)-S)}(\frac 12 \frac {d_t\rho}{\rho}-{d_tS})\\
&=e^{(\frac 12 \log(\rho)- S)}\Big(\frac 12 \frac {\sum_{j\in N(i)}w_{ij}(S_i-S_j)\theta_{ij}(\rho)}{\rho}+\frac 12\sum_{j\in N(i)}w_{ij}(S_i-S_j)^2\frac{\partial \theta_{ij}}{\partial \rho_i}-\frac 18 \frac{\partial}{\partial \rho_i} I(\rho) \Big).
\end{align*}
Rewriting the above systems into compact form leads to
\begin{align}
d_tf=-\frac 12 \Delta_{G} f, \label{ham-schrB-2}\\
d_tg=\frac 12 \Delta_{G}g, \nonumber
\end{align}
where $\Delta_{G}$ is the nonlinear discretization of the Laplacian operator,
 \begin{align*}
 &(\Delta_G f)_j\\
 &=-f_j \Bigg(\frac 1{f_jg_j}\sum_{l\in N(j)}\Big(\widetilde w_{jl}(\log(f_j/g_j)-\log(f_l/g_l))\widetilde \theta_{ij}(fg)+w_{jl}(\log(f_jg_j)-\log(f_l g_l)) \theta_{ij}(fg)\Big)\\
 &\qquad+\sum_{l\in N(j)}\Big(\widetilde w_{jl}|\log(f_j/g_j)-\log(f_l/g_l)|^2\frac {\partial \widetilde  \theta_{ij}(fg)}{\partial f_jg_j}+ w_{jl}|\log(f_jg_j)-\log(f_lg_l)|^2\frac {\partial \theta_{ij}(fg)}{\partial f_jg_j} \Big)
 \Bigg).
 \end{align*}
 By choosing $\theta=\theta^U,$ the above  nonlinear discretization of the heat equation gives a non-Markov random walk as a reference measure $R$ which is related to a non-Markov process in minimum problem of relative entropy $H(P|R)$.

\begin{rk}  In approach (A),  the Hamiltonian systems (\eqref{ham-schr1}, \eqref{ham-schr} and \eqref{ham-schr3}) are corresponding to the control problem \eqref{SBP_KL}, which is derived from discretizing the relative entropy $H(P|Q)$ in \eqref{schrod_original}; In approach (B), the Hamiltonian systems (\eqref{ham-schrB-1} and \eqref{ham-schrB-2}) are corresponding to the control problem \eqref{SBP_Fisher}, which is derived via discretizing the Fisher information $I(\rho)$ in \eqref{schro1}. It worth mentioning that under continuous cases, \eqref{schrod_original} and \eqref{schro1} are equivalent under the transform \eqref{Hopf-cole transform} and their corresponding Hamiltonian systems are also equivalent. However, this is not true for discrete cases. Discretizing the SBP at different stages leads to different Hamiltonian systems.
Furthermore, by comparing the systems \eqref{ham-schr3} and \eqref{ham-schrB-2} related to $f,g$, is not hard to see that the approach (A) can be viewed as a linear approximation of Laplacian operator of systems related to $f,g$ in the continuous SBP when the reference process is Markov. 
The approach (B) is related to a nonlinear approximation of of Laplacian operator in the continuous SBP. When we understand (B) in the relative entropy, it must corresponds to a nonlinear Markov process.   
\end{rk}

\subsection{Periodic marginal distribution of Hamiltonian process in SBP}

The periodic solution, as one classical topic of Hamiltonian systems, has been studied for many decades (see e.g. \cite{Che28,Rab78,MW89}).
For our considered Hamiltonian process, the periodicity of the solution appears in the density evolution. Below, we present several examples of periodic reference process, and prove that if the periodic Hamiltonian process exists, it coincides with the reference process in SBP.

By using the Floquet theorem in \cite{Tes12}, the fundamental matrix $X(t)$ satisfies  $X(t+T)=X(t)\exp(LT),$ where $\exp(LT)$ is a non-singular constant  matrix.   The Floquet exponents of $d_t \rho=\rho Q_t $ are the eigenvalues $\mu_i, i\le k\le N$ of the matrix $L$. If there exists some $i$ such that $\exp(\mu_i T)=1$ or $-1,$ then there exists periodic density function with period $T$ or $2T.$ As a consequence, we obtain the following results.

 \begin{lm}
 Assume that $\{Q_t\}_{t\ge 0}$ is transition rate matrix and $Q_t$ is T-periodic. If there exists a Floquet exponent $\mu=  \frac{k\pi\bf i}{T}, k\in \mathbb Z $,  then $d_t \rho=\rho Q_t $ has a periodic density.
 \end{lm}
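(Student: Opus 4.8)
The plan is to apply the Floquet theory machinery already recalled in the text. First I would recall the setup: the density equation $d_t\rho = \rho Q_t$ is a linear system with $T$-periodic coefficients (since $Q_t$ is $T$-periodic), so by the Floquet theorem in \cite{Tes12} its fundamental matrix factors as $X(t) = P(t)\exp(tL)$ with $P(t+T)=P(t)$, or equivalently $X(t+T) = X(t)\exp(TL)$, and the Floquet exponents are the eigenvalues $\mu$ of $L$. The key point is that the monodromy matrix $M := X(T)X(0)^{-1} = \exp(TL)$ has $e^{\mu T}$ among its eigenvalues whenever $\mu$ is a Floquet exponent.

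Second, I would use the hypothesis $\mu = \tfrac{k\pi\,\mathbf i}{T}$. If $k$ is even, say $k = 2m$, then $e^{\mu T} = e^{2m\pi\mathbf i} = 1$, so $1$ is an eigenvalue of the monodromy matrix $M$; pick an eigenvector $\rho_*$ with $M\rho_* = \rho_*$ (taking real/imaginary parts if needed, since $Q_t$ is real, so complex eigenvectors come in conjugate pairs and the real invariant subspace is nontrivial). Then the solution with initial value $\rho_*$ satisfies $\rho(t+T) = X(t+T)\rho_* = X(t)M\rho_* = X(t)\rho_* = \rho(t)$, i.e. it is $T$-periodic. If $k$ is odd, then $e^{\mu T} = -1$, so $-1$ is an eigenvalue of $M$; the same argument gives $\rho(t+T) = -\rho(t)$, hence $\rho(t+2T) = \rho(t)$, so $\rho$ is $2T$-periodic. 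Either way $d_t\rho = \rho Q_t$ admits a periodic density, which is the assertion.

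Third, I should address the one genuine subtlety: a priori an eigenvector of the monodromy matrix need not lie in the probability simplex $\mathcal P(G)$, so I want to argue that one can choose the periodic solution to be (a nonzero multiple of) a genuine density. Here I would invoke that $Q_t$ is a transition rate matrix, so the flow $d_t\rho = \rho Q_t$ preserves total mass and maps the simplex into itself; thus $M$ maps $\mathcal P(G)$ into $\mathcal P(G)$, and one can either apply a Brouwer/Perron--Frobenius type fixed point argument to the eigenvalue $1$ case, or simply note that for the statement as phrased ("has a periodic density") it suffices to exhibit a periodic solution of the linear system lying in the mass-preserving hyperplane and then normalize. I expect this normalization/positivity bookkeeping to be the main obstacle, since the purely linear-algebraic part (extracting $1$ or $-1$ as a monodromy eigenvalue from the given Floquet exponent) is immediate from \cite{Tes12}; everything else is routine.
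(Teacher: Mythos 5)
Your proof is correct and follows essentially the same route as the paper, which gives no formal proof but justifies the lemma in the paragraph immediately preceding it via the Floquet factorization $X(t+T)=X(t)\exp(LT)$ and the observation that a Floquet exponent with $e^{\mu T}=1$ or $-1$ yields a $T$- or $2T$-periodic solution. Your third step---verifying that the periodic solution can actually be taken to be a probability density by using the fact that a transition rate matrix makes the flow preserve the simplex---addresses a point the paper leaves implicit, and is the right extra care to take (in the $-1$ case the eigenvector itself has zero mass and mixed signs, so one must perturb around the Perron eigenvector of the monodromy matrix).
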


 \begin{ex}
 Consider a 2-nodes graph $G$. Given a reference measure which possesses the marginal distribution as follows,
 \begin{align*}
 d_t \rho_1&=\rho_1m_{11}+\rho_2m_{21},\\
 d_t \rho_2&=\rho_1m_{12}+\rho_2m_{22},
 \end{align*}
 where $m_{21}=-m_{11},$ $m_{22}=-m_{12},$
 $m_{11}=-\frac{\frac 12-\frac 14\cos(t)+\frac 18 \sin(t)-\frac 1{16}\sin(t)\cos(t)}{(\frac 12+\frac 14\cos(t))^2}$ and $m_{22}=-\frac 1{\frac 12+\frac 14 \cos(t)}.$
 
 There exists a nontrivial periodic solution $\rho_1(t)=\frac 12+\frac 14 \cos(t), \rho_2(t)=1-\rho_1(t)$.  And the periods of $\rho_1$ and $\rho_2$ are both $T=2\pi.$
Therefore, there exists a time inhomogenous Markov process $X_t$ with periodic marginal distribution $\rho_t$ on $G$ with the transition rate matrix $Q_t=(m_{ij})_{i,j\le 2}$.
\end{ex}

We can also show the existence of time inhomogeneous Markov process with periodic marginal distribution on any fully-connected graph.  

 \begin{prop}\label{prop-exi-per}
Suppose $G$ is a fully connected graph, and $\{\rho_t\}$ is a periodic density trajectory (with period $T$) in $\mathcal{P}_{o}(G)$, then we can always find a transition rate matrix $Q(t)$ such that $\rho_t$ is the solution to the master equation $\dot{\rho_t} = \rho_tQ(t)$. 
\end{prop}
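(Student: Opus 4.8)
The plan is to use the full connectivity of $G$ to write $Q(t)$ explicitly as the sum of a particular solution of the (highly underdetermined) linear relation $\dot\rho_t=\rho_tQ(t)$ and a correction term lying in the kernel of the map $Q\mapsto\rho_tQ$; the correction is a ``reversible'' generator on the complete graph, scaled large enough to swallow any negative entries produced by the particular solution.

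First I would record two elementary consequences of the hypotheses. Differentiating $\sum_{i\in V}\rho_i(t)\equiv1$ gives $\sum_{i\in V}\dot\rho_i(t)\equiv0$. Since $t\mapsto\rho_t$ is $T$-periodic, continuously differentiable, and takes values in the open simplex $\mathcal{P}_o(G)$, the image $\{\rho_t:t\in[0,T]\}$ is a compact subset of $\mathcal{P}_o(G)$; hence there are constants $\epsilon_0>0$ and $M<\infty$ with $\rho_i(t)\ge\epsilon_0$ and $|\dot\rho_i(t)|\le M$ for all $i\in V$ and all $t$. Next I would write down a particular solution: define $Q^0(t)$ by $Q^0_{ij}(t)=\dot\rho_j(t)$ for every pair $i,j\in V$, so that each row of $Q^0(t)$ equals the vector $\dot\rho_t$. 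Its row sums vanish because $\sum_j\dot\rho_j(t)=0$, and $(\rho_tQ^0(t))_j=\sum_i\rho_i(t)\dot\rho_j(t)=\dot\rho_j(t)$ since $\sum_i\rho_i(t)=1$; thus $\dot\rho_t=\rho_tQ^0(t)$. The only flaw is that the off-diagonal entries $\dot\rho_j(t)$ may be negative.

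To repair this I would add the kernel correction. For a constant $K>0$ to be fixed later, set $Q^{\mathrm{ker}}_{ij}(t)=K/\rho_i(t)$ for $i\ne j$ and $Q^{\mathrm{ker}}_{ii}(t)=-(N-1)K/\rho_i(t)$ (this uses that every pair $ij$ is an edge). A one-line computation gives $(\rho_tQ^{\mathrm{ker}}(t))_j=(N-1)K-(N-1)K=0$, while $Q^{\mathrm{ker}}(t)$ manifestly has vanishing row sums and nonnegative off-diagonal entries. Then put $Q(t)=Q^0(t)+Q^{\mathrm{ker}}(t)$, i.e.
\begin{align*}
Q_{ij}(t)=\dot\rho_j(t)+\frac{K}{\rho_i(t)}\ \ (i\ne j),\qquad Q_{ii}(t)=\dot\rho_i(t)-\frac{(N-1)K}{\rho_i(t)}.
\end{align*}
By construction $\rho_tQ(t)=\rho_tQ^0(t)+\rho_tQ^{\mathrm{ker}}(t)=\dot\rho_t$, the row sums of $Q(t)$ are zero, and for $i\ne j$ we have $Q_{ij}(t)\ge -M+K/\rho_i(t)\ge -M+K$ because $\rho_i(t)\le1$; choosing $K>M$ makes all off-diagonal entries positive. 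Hence $Q(t)$ is a bounded, $T$-periodic transition rate matrix, and since the linear master equation $\dot\rho=\rho Q$ with initial datum $\rho_0$ has a unique solution, $\rho_t$ is that solution.

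All the displayed identities are routine verifications; the only point requiring any care is the last one, namely that the reservoir term $K/\rho_i(t)$ can be chosen to dominate $\sup_{t,j}|\dot\rho_j(t)|$ uniformly in $t$, which is exactly what the compactness bounds $\epsilon_0$ and $M$ supply. This is also where full connectivity is genuinely used: both $Q^0$ and $Q^{\mathrm{ker}}$ have all off-diagonal slots populated, so their admissibility as transition matrices on $G$ requires every $ij$ to be an edge. (On a merely connected graph one would have to redesign a particular solution supported on $E$, which is more delicate, and in general no such $Q$ need exist.) Finally, since $Q(\cdot)$ is continuous and $T$-periodic, the construction of the previous section produces a time-inhomogeneous Markov process on $G$ realizing $\rho_t$ as its marginal.
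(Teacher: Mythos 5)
Your proof is correct and follows essentially the same strategy as the paper's: write $Q(t)$ as a particular solution of the underdetermined linear system $\rho_tQ=\dot\rho_t$ plus a large multiple $K$ of an explicit element of its kernel (your $Q^{\mathrm{ker}}_{ij}=K/\rho_i$ is, up to normalization, exactly the paper's kernel vector $m^0$ with entries $\tfrac{1}{(n-1)\rho_i}$), and then use compactness of the periodic orbit in $\mathcal{P}_o(G)$ to choose $K$ large enough to make all off-diagonal entries nonnegative. The only difference is cosmetic: the paper's particular solution is supported on the Hamiltonian cycle $1\to2\to\cdots\to n\to1$, whereas yours is the rank-one matrix with every row equal to $\dot\rho_t$, which is arguably a slightly cleaner verification.
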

\textbf{Proof:}
Assume $G$ contains $n$ vertices.  Let us assume the non-diagonal entries of $Q(t)$ to be $\{m_{ij}\}$,  we rearrange these entries to form a $n(n-1)$ dimensional vector as:
\begin{equation*}
  m = (m_{12},...,m_{1n}, m_{21}, m_{23},...,m_{2n},...,m_{n1}, ..., m_{n\,n-1})^T.
\end{equation*}
Plugging $m$ into the Master's equation, we derive the linear equation for $m$:
\begin{equation}
    P(t) ~ m = \left(\begin{array}{cccc} \dot{\rho_1} &  \dot{\rho_2} &  \dots  &  \dot{\rho_n}  \end{array}\right)^T.   \label{Linear eq for m_ij fully connect}
\end{equation}
Where $P$ is an $n\times n(n-1)$ matrix defined as
\begin{equation*}
P(t) = \left(\begin{array}{c|c|c|c}
    P_1(t) & P_2(t) &\dots  & P_n(t)
\end{array}\right).
\end{equation*} 
\begin{equation*}
P_m(t) = \left(\begin{array}{cc}
    \rho_m(t) I_m & 0_{m\times (n-m-1)} \\
    -\rho_m(t) \boldsymbol{e}_m^T & -\rho_m(t) \boldsymbol{e}_{n-m-1}^T \\
      0_{  (n-m-1)\times m } & \rho_m(t) I_{n-m-1} 
\end{array}\right)_{n\times(n-1)} \quad \textrm{for}~1\leq m \leq n
\end{equation*}
Here we denote $\boldsymbol{e}_m^T = \underbrace{(1,...,1)}_{\text{$m$ 1s}}$. We can verify that 
\begin{equation*}
  m^0  = 
 \left(\frac{1}{(n-1)\rho_1}\boldsymbol{e}_{n-1}^T, \frac{1}{(n-1)\rho_2}\boldsymbol{e}_{n-1}^T,...,\frac{1}{(n-1)\rho_n}\boldsymbol{e}_{n-1}^T\right)
\end{equation*}
belongs to the kernel of $P(t)$, and that $P(t)$  is a full rank matrix. There must exist a solution $m^*$ to \eqref{Linear eq for m_ij fully connect}, where its entries are expressions of $\{\rho_i,\dot{\rho_i}\}_{i\in V}$. In other words, we can directly give such a solution. To be more specific, let's consider the transport process on the loop from vertex 1 to 2, 2 to 3,... n-1 to n and n to 1. This corresponds to setting $m_{ij}$ to $0$ except $m_{12},m_{23},...,m_{n-1~n},$ and $m_{n1}$. Now the equation \eqref{Linear eq for m_ij fully connect} becomes:
\begin{equation*} 
  \left(
  \begin{array}{ccccc}
  -\rho_1 &             &             &           & \rho_n  \\
              & -\rho_2 &             &          &              \\
              &              & \ddots  &          &              \\
               &             &            & \ddots &             \\
               &              &             &          &  -\rho_n
  \end{array}\right)
  \left(\begin{array}{c}
  m_{12}\\
  m_{23}\\
  \vdots \\
  m_{n-1~n} \\
  m_{n1}
  \end{array}\right) = \left(\begin{array}{c}
  \dot{\rho_1}\\
  \dot{\rho_2}\\
  \vdots \\
  \dot{\rho}_{n-1}\\
  \dot{\rho_n}
  \end{array}\right) 
\end{equation*}
Therefore the solution is $(-\frac{\dot{\rho_1}-\dot{\rho_n}}{\rho_1}, -\frac{\dot{\rho_2}}{\rho_2},\cdots,-\frac{\dot{\rho  }_{n-1}}{\rho_{n-1}},  -\frac{\dot{\rho_n}}{\rho_n})^T.$

Then we can directly take $m(t) = Km^0(t) + m^*(t)$, since $\{\rho_t\}$ is in the interior of $\mathcal{P}(G)$, we can always find a large enough $K>0$ that guarantees the entries of $m(t)$ to be always non negative. And $m(t)$ forms the transition rate matrix $Q(t)$ whose master equation admits the periodic solution $\{\rho_t\}$.\\

\begin{ex}
Consider the periodic marginal distribution  $\rho_t$:
\begin{equation*}
\rho_t = (\frac{\cos t}{2\sqrt{6}}+\frac{\sin t}{6\sqrt{2}}+\frac{1}{3}, -\frac{\cos t}{2\sqrt{6}}+\frac{\sin t}{6\sqrt{2}}+\frac{1}{3}, -\frac{\sin t}{3\sqrt{2}}+\frac{1}{3}).
\end{equation*}
which is a circle centered at $(\frac{1}{3}, \frac{1}{3},\frac{1}{3})$ with radius $\frac{1}{2\sqrt{3}}$ on $\mathcal{P}(G)$. 
Following the idea of Proposition \ref{prop-exi-per}, one may take 
\begin{align*}
  & m_{11}(t) = -\frac{6\sqrt{2} +\sqrt{3}\sin t -3\cos t}{\sqrt{3}\cos t+4\sin t+2\sqrt{2}},\; m_{12}=-m_{11}, \; m_{13}=0,\\ 
  & m_{22}(t) = -\frac{24 - 4\sqrt{2}\cos t}{-\sqrt{6}\cos t + \sqrt{2}\sin t + 4},\;
  m_{21}=-\frac 12 m_{22}, \; m_{23}=-\frac 12 m_{22}, \\
  &  m_{33}(t) = -\frac{3\sqrt{2}}{\sqrt{2}-\sin t}\; m_{13}=0, m_{23}=-m_{33}.
\end{align*}
such that $d_t=\rho_t Q_t$ with $Q_t=(m_{ij})_{i,j\le3}.$
\end{ex}

Next we aim to use general SBP \eqref{gen-sch} to produce a Hamiltonian process with periodic marginal distribution on $G.$  
In particular, when the convex function $u=x\log(x)-x-1$,
by using the Nelson's transformation $\psi_i=\sqrt{\rho_i}e^{S_i}$, the Hamiltonian system can be also rewritten as 
\begin{align*}
dS_i
&=-m_{ii}-\frac 12 \sum_{j\in N(i)}e^{S_j-S_i}m_{ij}(t)\frac {\sqrt{\rho_j}}{\sqrt{\rho_i}}-\frac 12\sum_{j\in N(i)}e^{S_i-S_j}m_{ji}(t)\frac {\sqrt{\rho_j}}{\sqrt{\rho_i}},\\\nonumber
d\rho_i&=\sum_{j\in N(i)}e^{S_i-S_j}m_{ji}(t)\sqrt{\rho_j}\sqrt{\rho_i}-\sum_{j\in N(i)}e^{S_j-S_i}m_{ij}(t)\sqrt{\rho_i}\sqrt{\rho_j}.
\end{align*}
with the Hamiltonian 
  $  \tilde{\mathcal H}(\rho,S,t)
 = \sum_{i\in V}\sum_{j\in N(i)} e^{(S_j-S_i)} m_{ij}(t)\sqrt{\rho_i\rho_j}.$
Taking $\psi$ as a time-independent potential and choosing  $\rho^0,\rho^1$ as the initial and terminal distribution from the periodic solution, then the distribution of the solution process is exactly same as the reference process. Thus it induces a Hamiltonian system which is periodic in time.
 Therefore there exists SBP with the given $\rho^0,\rho^1$ such that the solution process is Hamiltonian and its marginal distribution is periodic in time. 
 
In the following, we assume that the 
Legendre transformation $u^*$ of $u$ in \eqref{gen-sch} is continuous differentiable  and satisfies 
\begin{align*}
&u^*(x)\ge 0,\; \text{if}\; x\le 0,\;
u^*(x)\le 0,\;\text{if}\; x\ge 0,\; \\
&\frac {\partial u^*}{\partial x}(0)=1, \; \lim_{x\to-\infty} \Big|\frac {\partial u^*}{\partial x}(x)\Big|<\infty,\;\lim_{x\to+\infty} \frac {\partial u^*}{\partial x}(x) =+\infty .
\end{align*} 
Now we are able to give the characterization of the periodic Hamiltonian process on finite graph via general SBP.

\begin{tm}
Assume that the reference process is periodic with the marginal distribution and its period $T>0$.
There always exists $\rho^0,\rho^1$ such that the critical point of the general 
SBP problem \eqref{gen-sch} is a Hamiltonian process and its marginal distribution is periodic in time. 
\end{tm}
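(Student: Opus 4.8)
The plan is to reduce the general statement to the special case $u(x)=x\log x-x+1$ treated just above, by exhibiting a degenerate solution of the Hamiltonian system in which the potential is frozen. Write $\{\rho_t\}$ for the $T$-periodic marginal trajectory of the reference process and $\{m^t_{ij}\}$ for its (hence $T$-periodic) rates, with $m^t_{ii}=-\sum_{j\in N(i)}m^t_{ij}$ so that mass is conserved. First I would take the Lagrange multiplier $\psi(\cdot,t)\equiv c$ constant in both space and time and check that the pair $(\rho_t,\psi)$ solves the Hamiltonian system with Hamiltonian \eqref{hamiltonian general}: since $\frac{\partial u^*}{\partial x}(0)=1$, the equation $\partial_t\rho=\partial_\psi\mathcal H(\rho,\psi)$ collapses to the reference master equation $\frac{d}{dt}\rho(i,t)=\sum_{j\in N(i)}m^t_{ji}\rho_j-m^t_{ij}\rho_i$; and since the sign conditions on $u^*$ force $u^*(0)=0$, the equation $\partial_t\psi=-\partial_\rho\mathcal H(\rho,\psi)$ reduces to $\partial_t\psi=0$, consistent with $\psi$ being constant. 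Then I would set $\rho^0:=\rho_0$ and $\rho^1:=\rho_1$ (after a linear rescaling of time so that $[0,1]$ covers one full period, if one wants the restriction itself to close up), so that the reference trajectory is an admissible curve for \eqref{gen-sch}.

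Next I would show this curve is actually a critical point. From $\frac{\partial u^*}{\partial x}(0)=1$ and $u^*(0)=0$, Legendre duality gives $u'(1)=0$ and $u(1)=0$, and convexity of $u$ then yields $u\ge 0$ with minimum attained at $1$. Hence the cost functional in \eqref{gen-sch} is nonnegative on every admissible curve, while the choice $\widehat m\equiv m$ — which is precisely our reference curve — makes each integrand term equal to $u(1)=0$; so the reference trajectory is a global minimizer, in particular a critical point, and it satisfies the first-order system derived earlier, which is the Hamiltonian system above. To see it is a Hamiltonian process in the sense of Definition~\ref{df-hm-2}, take $S:=\psi$ (or, if a particular normalization is wanted, the symplectic image of $(\rho,\psi)$ under a Hopf--Cole/Madelung-type transform, which leaves the Hamiltonian structure intact): the Hodge decomposition of $v=\nabla_G S$ has trivial divergence-free part, the transition rate matrix has off-diagonal entries $Q_{ij}=\frac{\partial u^*}{\partial x}(S_j-S_i)\,m_{ij}$, which is continuous in $S_j-S_i$ because $u^*\in C^1$ and is nonnegative at $S_j-S_i=0$, and $(\rho,S)$ solves the Hamiltonian system on $T^*\mathcal P(G)$ with Hamiltonian \eqref{hamiltonian general}; along this solution $Q_{ij}=m_{ij}$, so the induced time-inhomogeneous Markov process is exactly the reference process.

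Finally, since the marginal of the resulting critical point coincides with the reference marginal $\{\rho_t\}$, which is $T$-periodic by hypothesis, the constructed Hamiltonian process has periodic marginal distribution, which is what is claimed. I expect the only delicate point to be the bookkeeping of the last step: carefully confirming that the frozen-potential solution meets every clause of Definition~\ref{df-hm-2} (in particular that the Hamilton--Jacobi half of the system degenerates consistently and that the $Q_t$-structure is of the required form), and reconciling the notational conventions between $w_{ij}$, $m_{ij}$ and $S$ versus $\psi$. The hypotheses $u^*(0)=0$ and $\frac{\partial u^*}{\partial x}(0)=1$ are exactly what make this degeneration work; the remaining growth conditions on $u^*$ are not needed for this construction (they serve to guarantee solvability of the general SBP with arbitrary endpoints elsewhere).
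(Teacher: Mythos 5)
Your proposal is correct and follows essentially the same route as the paper: freeze $\psi$ to be spatially constant so that $\frac{\partial u^*}{\partial x}(0)=1$ and $u^*(0)=0$ collapse the critical-point system to the reference master equation with $\partial_t\psi=0$, and choose $\rho^0,\rho^1$ as snapshots of the periodic reference marginal. Your additional step --- using Legendre duality to get $u\ge 0$ with $u(1)=0$ and hence global minimality of $\widehat m\equiv m$ --- is a justification of criticality that the paper leaves implicit, but it does not change the argument.
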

\begin{proof}
Notice that the critical point of SBP satisfies 
\begin{align*}
 & \frac{\partial }{\partial t} \rho(i,t) = \sum_{j\in N(i)} -\frac {\partial u^{*}}{\partial x}(\psi_j-\psi_i)m_{ij}\rho(i,t) +\frac {\partial u^{*}}{\partial x}(\psi_i-\psi_j)m_{ji}\rho(j,t),\\
 & \frac{\partial }{\partial t} \psi(i,t) = -\sum_{j\in N(i)}u^{*}(\psi_j-\psi_i)m_{ij},
\end{align*}
where $\rho(0)=\rho^0, \rho(1)=\rho^1.$
Choosing $\rho^0$, $\rho^1$ as two different distribution at different time of the reference process, and taking $\psi_i=\psi_j,$ we get 
\begin{align*}
 & \frac{\partial }{\partial t} \rho(i,t) = \sum_{j\in N(i)} -m_{ij}\rho(i,t) +m_{ji}\rho(j,t),\\
 & \frac{\partial }{\partial t} \psi(i,t) = 0.
\end{align*}
This implies that the critical point forms a Hamiltonian system with Hamiltonian $H(\rho,\psi,t)=\sum_{i,j}m_{ij}(t)\rho_i.$
 Due to the fact that the marginal distribution of reference process is periodic in time, 
the critical point is exactly equal to the reference process and its marginal distribution is periodic.
\end{proof}

One may wonder whether there exists certain Hamiltonian process whose marginal distribution is periodic but is not the reference process. We first use a 2-nodes graph example to point out it is not possible to get such Hamiltonian by using SBP when $u(x)=x\log(x)-x-1$. Even worse, we show that for general finite graph, the periodic Hamiltonian process exists if and only if it equals to a reference process in general SBP.

\begin{ex}\label{2node}
Given $G$ consisting of 2 nodes.
Assume the reference process with transition rate matrix $m$ is periodic with period $T>0$ and $\{t\in [0,T] | m_{ij}(t)=0, ij\in E\}$ has Lebesgue measure zero.
Notice that  $\rho, S$ of the Hamiltonian process $X(t)$ satisfies 
\begin{align*}
 & \frac{\partial }{\partial t} \rho(1,t) = -e^{\psi(2,t)-\psi(1,t)}m_{12}\rho(1,t) +e^{\psi(1,t)-\psi(2,t)}m_{21}\rho(2,t),\\
 & \frac{\partial }{\partial t} (\psi(1,t)-\psi(2,t) )= -(e^{\psi(2,t)-\psi(1,t)}-1)m_{12}+(e^{\psi(1,t)-\psi(2,t)}-1)m_{21}.
 \end{align*}
Since $m_{12}$, $m_{21}\ge 0,$ then $\psi(1)-\psi(2)$ equals to constant if and only if $\psi(1)=\psi(2).$ Meanwhile, if $\psi_1-\psi_2>0$, then $\psi_1-\psi_2$ is increasing to $+\infty$, and $\psi_1-\psi_2$ is decreasing to $-\infty$ if $\psi_1<\psi_2$.  
Then we claim that $\rho_1$ is not periodic in time. If we assume that $\rho_1$ is periodic with period $T_1,$ then it holds true 
$\int_{kT_1}^{(k+1)T_1} -e^{\psi(2,t)-\psi(1,t)}m_{12}\rho(1,t) +e^{\psi(1,t)-\psi(2,t)}m_{21}\rho(2,t) dt=0.$
Without losing generality, let us assume that $\psi_1-\psi_2>0$. It is not hard to see that 
$e^{\psi(1,t)-\psi(2,t)}$ is increasing to $+\infty$ and $e^{\psi(1,t)-\psi(2,t)}$ is decreasing to $0$ as $t\to \infty$. The boundedness of $\rho(1,t),\rho(2,t)$ yield that there exists large enough $k$ such that
\begin{align*}
\int_{kT_1}^{(k+1)T_1} -e^{\psi(2,t)-\psi(1,t)}m_{12}\rho(1,t) +e^{\psi(1,t)-\psi(2,t)}m_{21}\rho(2,t) dt>0,
\end{align*}
which leads to a contradiction. 
Therefore, $\rho(t)$ is periodic in time if and only if $\psi_1=\psi_2$. This implies that $X(t)$ is exactly the reference process.
\end{ex}

\begin{tm}\label{per-tri}
Assume the reference process with transition rate matrix $m$ is periodic with period $T>0$ and $\{t\in [0,T] | m_{ij}(t)=0, ij\in E\}$ has Lebesgue measure zero. Then the Hamiltonian process which has periodic density distribution in general SBP problem  \eqref{gen-sch} is equal to the reference process which has the periodic density distribution.
\end{tm}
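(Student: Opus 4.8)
The plan is to show that the potential $\psi$ of the critical point of \eqref{gen-sch} must be \emph{flat}, i.e.\ $\psi_i(t)=\psi_j(t)$ for every edge $(i,j)\in E$ and every $t$. Granting this, the rate matrix of the associated Hamiltonian process is $Q_{ji}=\partial_x u^*(\psi_i-\psi_j)m_{ji}=\partial_x u^*(0)m_{ji}=m_{ji}$ with $Q_{ii}=-\sum_{j\in N(i)}m_{ij}$, so $Q_t\equiv m(t)$ and $X(t)$ is exactly the reference process, which carries the given periodic density. Two structural facts are used throughout: the $\psi$-equation $\partial_t\psi_i=-\sum_{j\in N(i)}u^*(\psi_j-\psi_i)m_{ij}(t)$ is decoupled from $\rho$, so $\psi$ is determined by $\psi(0)$ and the $T$-periodic data $m(\cdot)$ alone; and $\rho$ then solves the \emph{linear} master equation $\dot\rho=\rho Q_t$ with $Q_t$ built from $\psi(t)$ and $m(t)$. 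Since $\rho$ is periodic and stays in $\mathcal P_o(G)$, it is bounded away from the boundary, $\rho_i(t)\ge c_0>0$.

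First I would prove that $\Delta(t):=\max_i\psi_i(t)-\min_i\psi_i(t)$ is non-decreasing: at an index $i^*$ realising the maximum all neighbours satisfy $\psi_j-\psi_{i^*}\le0$, hence $u^*(\psi_j-\psi_{i^*})\le u^*(0)=0$, so $\partial_t\psi_{i^*}=-\sum_{j\in N(i^*)}u^*(\psi_j-\psi_{i^*})m_{i^*j}\ge0$ (Danskin); symmetrically $\min_i\psi_i$ is non-increasing. A flat $\psi$ is a stationary point of the $\psi$-flow, so by uniqueness flatness is invariant forward and backward in time; hence either $\psi$ is flat (and we are done) or $\psi$ is never flat and $\Delta(t)>0$ for all $t$. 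To upgrade this to $\Delta(t)\to\infty$ I would argue by contradiction: if $\Delta$ stayed bounded then $\psi$ would be bounded, a subsequential limit $\psi_\infty$ of $\{\psi(kT)\}$ would generate, through the time-$T$ flow of the $T$-periodic $\psi$-equation, a solution along which $\Delta$ is constant; but along a non-flat solution the argmax set is a proper subset of the connected graph $G$, hence for a.e.\ $t$ it has an outgoing edge with $m_{ij}(t)>0$, forcing $\partial_t\max_i\psi_i>0$ and $\partial_t\min_i\psi_i<0$, so $\Delta$ would be strictly increasing — a contradiction. Thus $\Delta(t)\to\infty$; assume $\max_i\psi_i(t)\to+\infty$ (the case $\min_i\psi_i(t)\to-\infty$ being analogous).

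For the final step I would adapt the computation of Example~\ref{2node}. With $\rho$ of period $T_1$, $\int_s^{s+T_1}\dot\rho_i\,dt=0$ for all $s,i$; summing over $i$ in the near-maximal set $S=S(s):=\{i:\psi_i(s)\ge\max_l\psi_l(s)-B\}$ for a suitable fixed $B$ gives $\int_s^{s+T_1}\big(\mathrm{in}_S(t)-\mathrm{out}_S(t)\big)\,dt=0$, where $\mathrm{out}_S=\sum_{k\in S,\,j\in N(k)\setminus S}\partial_x u^*(\psi_j-\psi_k)\,m_{kj}\rho_k$ and $\mathrm{in}_S=\sum_{k\notin S,\,j\in N(k)\cap S}\partial_x u^*(\psi_j-\psi_k)\,m_{kj}\rho_k$. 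The hypothesis $\lim_{x\to-\infty}|\partial_x u^*(x)|<\infty$ keeps $\partial_t\psi_j$ bounded above for every $j$ and the out-flow rates $\partial_x u^*(\psi_j-\psi_k)$ ($k\in S$, $j\notin S$) bounded over $[s,s+T_1]$, so $\int_s^{s+T_1}\mathrm{out}_S\,dt\le C$ uniformly in $s$. On the other hand, since $\Delta(s)\to\infty$, along a geodesic in $G$ from the argmax to the argmin some edge has potential drop $\ge\Delta(s)/\mathrm{diam}(G)\to\infty$; together with $\lim_{x\to+\infty}\partial_x u^*(x)=+\infty$ and $\rho\ge c_0$ this forces $\int_s^{s+T_1}\mathrm{in}_S\,dt\to\infty$ along a suitable sequence $s\to\infty$, contradicting the displayed identity. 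Hence $\psi$ is flat and $X(t)$ is the reference process.

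I expect this last step to be the main obstacle. In Example~\ref{2node} it is immediate because on two nodes the single gap $\psi_1-\psi_2$ is itself monotone, so one can control $e^{\psi_1-\psi_2}$ and $e^{\psi_2-\psi_1}$ cleanly on each period. On a general graph the argmax, the argmin and the steep edge may all move in time, so $S(s)$ and the relevant potential differences change during $[s,s+T_1]$; showing that $S(s)$ stays ``near the maximum'' and that the in-flow genuinely dominates the out-flow on average over a full period is the technical heart of the argument.
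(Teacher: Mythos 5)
Your proposal follows essentially the same route as the paper's proof: a maximum principle for the decoupled $\psi$-equation (max non-decreasing, min non-increasing), divergence of the potential gap unless $\psi$ is flat, and a contradiction with $\int \dot\rho_i\,dt=0$ over a period using the growth conditions on $\partial_x u^*$. The paper's final step picks a single intermediate node $j_m$ on a path from the argmax to the argmin rather than your cut-set flux balance, and it is no more detailed than your sketch at exactly the point you flag as the technical heart.
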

\textbf{Proof.}
Assume that there is a maximum $\psi_{i^*}\ge \psi_{i},$ $i\neq i^*$ and $\psi_{i^*}>\psi_{i_{min}}$. 
Then according to the evolution of $\psi,$ 
\begin{align*} 
\frac{\partial }{\partial t} \psi(i,t) = -\sum_{j\in N(i)}u^*(\psi(j,t)-\psi(i,t))m_{ij},
\end{align*}
then the maximum principle holds, i.e., $\psi_{i^*}(t)\ge \psi_{i}(t)\ge \psi_{i_{min}}(t).$
Notice that 
\begin{align*}
\frac{d}{dt} \rho(i,t) &= \sum_{j\in N(i)} -\frac {\partial u^{*}}{\partial x}(\psi_j-\psi_i)m_{ij}\rho(i,t) +\frac {\partial u^{*}}{\partial x}(\psi_i-\psi_j)m_{ji}\rho(j,t).
\end{align*}
The periodicity of $\rho_i$ implies that there exists $T_1>0$ for any $k\in \N^+$ such that 
{\small
\begin{align*}
\int_{kT_1}^{(k+1)T_1} \sum_{j\in N(i)} -\frac {\partial u^{*}}{\partial x}(\psi_j-\psi_i)m_{ij}\rho(i,t) +\frac {\partial u^{*}}{\partial x}(\psi_i-\psi_j)e^{\psi(i,t)-\psi(j,t)}m_{ji}\rho(j,t)dt=0.
\end{align*}
}
Due to the maximum principle, if there exists one node $l$ with a local maximum of $\psi_l$ connected with another node $k$ with a local minimum of $\psi_k$, it will lead to $\psi_l-\psi_k\to +\infty $ as $t\to \infty.$ This contradicts with the periodicity of $\rho_k$ and $\rho_l$.
If any node $l$ with a local maximum of $\psi_l$ is not connected with another node $k$ with a local minimum of $\psi_k$, we pick a road $l,j_1,\cdots,j_{w},k$ which connects $l$ and $k$. Notice that $\psi_{l} \to +\infty$, $\psi_{k}\to-\infty,$ $\psi_{j_m}\in (\psi_{k},\psi_{l}),m\le w.$ Then there must exists $j_m$ such that $m$ is the smallest number which satisfies  $\psi_{k}-\psi_{j_m}\to -\infty.$  Now consider the periodicity of $\rho_{j_m}.$ 
There exists $k'$ large enough such that 
{\small
\begin{align*}
\int_{k'T_1}^{(k'+1)T_1} \sum_{j\in N(j_m)} -\frac {\partial u^{*}}{\partial x}(\psi_j-\psi_{j_m}) m_{j_mj}\rho(j_m,t) + \frac {\partial u^{*}}{\partial x}(\psi_{j_m}-\psi_{j}) m_{jj_m}\rho(j,t)dt>0.
\end{align*}
}
This leads to a contradiction, we complete the proof.

\section{More examples}
 
In this section, we conclude the paper by presenting a few more examples of Hamiltonian processes on graph.
 \begin{ex}
 (Euler-Lagrangian equations \cite{gangbo2019geodesics}) Assume that the Lagrangian in density mainifold is 
 given by $\mathcal L(\rho_t, \dot \rho_t)=\frac 12 g_W(\dot \rho_t,\dot \rho_t)-\mathcal F(\rho_t).$  Here $g_W(\sigma_1,\sigma_2):=-\sigma_1(\Delta_\rho)^+\sigma_2$ where $\sigma_k\in T_{\rho}\mathcal P_o(G), k=1,2$ and $(\Delta_\rho)^+$ is the pseudo inverse of the weight graph Laplacian matrix  $\Delta_\rho(\cdot):=div_G(\rho \nabla_G(\cdot))$. 
 Then the critical point of $$\inf_{\rho_t}\int_0^T\mathcal L(\rho_t,\partial_t\rho_t)dt$$ with given $\rho_0$ and $\rho_T$ satisfies the Euler-Lagrangian equation
 \begin{align*}
 \partial_t \frac {\delta}{\delta \partial_t\rho_t}\mathcal L(\rho_t,\partial_t\rho_t)
 =\frac {\delta}{\delta \rho_t}\mathcal L(\rho_t,\partial_t\rho_t)+C(t).
 \end{align*}
 By introducing the Legendre transform $S_t=(-\Delta_{\rho_t})^+\partial_t\rho_t,$ it can be rewritten as a Hamiltonian system. That is 
 \begin{align*}
 &\partial_t\rho_t+div(\rho\nabla_G S)=0,\\
  &\partial_tS_t+\frac 14\sum_{j\in N(i)}(S_i-S_j)^2(\partial_{\rho_i}\theta({\rho_i,\rho_j})+\partial_{\rho_i}\theta({\rho_j,\rho_i}))+\frac {\delta }{\delta \rho_t}\mathcal F(\rho_t)=C(t),
 \end{align*}
 with the Hamiltonian $\mathcal H(\rho,S)=\frac 14\sum_{ij}(S_i-S_j)^2\theta_{ij}w_{ij}+\mathcal F(\rho_t).$
 Therefore, if the transition rate matrix in generalized master equation is well-defined, the Euler-Lagrangian equation in density space determines a Hamiltonian process on $G.$ 
 \end{ex}
 
  \begin{ex}
 (Madelung system \cite{CLZ20})
 The energy is given by $$\mathcal H(\rho,S)=\frac 14\sum_{ij\in E}(S_i-S_j)^2\theta_{ij}w_{ij}+\mathcal F(\rho_t)+\beta I(\rho_t), \beta>0.$$
 Here $\mathcal F(\rho)=\sum_{i}\rho_i\mathbb V_i+\sum_{i,j}\rho_i\rho_j\mathbb W_{ij},$ and $I(\rho)=\frac 12\sum_{ij\in E}(\log(\rho_i)-\log(\rho_j))^2\widetilde \theta_{ij}.$ Here $\widetilde \theta_{ij}$ is another density dependent weight on the graph that can be the same or different from $\theta_{ij}$. 
 The Madelung system is 
 \begin{align*}
 &\partial_t\rho_t+div(\rho\nabla_G S)=0,\\
  &\partial_tS_t+\frac 14\sum_{j\in N(i)}(S_i-S_j)^2(\partial_{\rho_i}\theta({\rho_i,\rho_j})+\partial_{\rho_i}\theta({\rho_j,\rho_i}))+\frac {\delta }{\delta \rho_t}\mathcal F(\rho_t)+\beta \frac {\delta }{\delta \rho_t} I(\rho_t)=C(t).
 \end{align*}
When taking $\theta=\theta^U$, the Madelung system in density space determines a Hamiltonian process on $G.$
This system has a close relationship with the discrete Schr\"odinger equation \cite{CLZ19}.
 \end{ex}
 
 \begin{ex}($L^p$-Wasserstein distance)
The $L^p$ Wasserstein distance, $p\in(1,\infty),$ is related to the following  minimization problem, 
\begin{align*}
W_p^p(\rho^0,\rho^1)
=\inf_{v}\{\int_0^1 \sum_{i=1}^N\sum_{j\in N(i)}\frac 12\theta_{ij}(\rho)v_{ij}^p dt: \partial_t \rho +div_G(\rho v)=0, \rho(0)=\rho^0,\rho(1)=\rho^1\}.
\end{align*}
We refer to \cite{DNS09} for a continuous version of $L^p$-Wasserstein distance.
Its critical point is related to the Hamiltonian system in density space 
 \begin{align*}
 &\partial_t\rho_t+div_G( \rho_t|\nabla_G S|^{q-2}\nabla_G S)=0,\\
 &\partial_t(S_i)+\frac 1{2q}\sum_{j\in N(i)}|(\nabla_G S)_{ij}|^q(\partial_{1}\theta_{ij}+\partial_{2}\theta_{ji})=0,
 \end{align*}
with the Hamiltonian $$\mathcal H(\rho,S)=\frac 1{2q}\sum_{i,j}|\nabla_G S|^q\theta_{ij}, \frac 1q+\frac 1p=1, p\in (1,\infty).$$
When the equation of $\rho$ is determined by a transition rate matrix, this leads to a Hamiltonian process. 

\end{ex} 

\section{Acknowledgement}
The authors are very grateful to Prof. Shui-Nee Chow for introducing this problem and his helpful suggestions.
The research is partially support by Georgia Tech Mathematics Application Portal (GT-MAP) and by research grants NSF DMS-1620345,  DMS-1830225, and ONR N00014-18-1-2852.

\section{Appendix}

\subsection{The background of SBP}

Denote $\Omega=C([0,1],\mathbb R^d).$ Given $R\in M^+(\Omega)$ the law of the reversible Brownian motion (here we consider the Brownian motion with the volume Lebesgue measure, denoted by $Leb$, as the initial distribution).
Consider the relative entropy of any probability measure with respect to $R$,
$$H(P|R)=\int_{\Omega} \log(\frac{dP}{dR})dP.$$
The SBP can be formulated as 
\begin{equation}
  \min H(P|R), P\in \mathcal P(\Omega): P_0=\mu_0, P_1=\mu_1.\label{schrod_original}
\end{equation}

Here $P_0:=P(X_0\in \cdot)$, $P_1:=P(X_1\in \cdot)$ and $X_t(\omega):=\omega(t)$ is the canonical process with $\omega\in \Omega$.  It is proven (see e.g. \cite{Leo14}) that 
if $H(\widetilde \mu_0|Leb)<\infty$ and $H(\widetilde \mu_1|Leb)<\infty$, the SBP has a unique solution $\widehat P$ which enjoys the following decomposition
\begin{align*}
\widehat P= f_0(X_0) g_1(X_1) R \in \mathcal P(\Omega),
\end{align*} 
where $f_0,g_1$ are nonnegative measurable functions such that $$\mathbb E_R[f_0(X_0)g_1(X_1)]=1.$$ 
Introduce the function $f_t, g_t$ defined by
\begin{align*}
f_t(z)&:= \mathbb E_R[f_0(X_0)| X_t=z],\\
g_t(z)&:=\mathbb E_R[g_1(X_1)|X_t=z], \; P_t\text{-}a.e.,\; z\in \mathbb R^d,
\end{align*}
and the constraint 
\begin{align*}
\widetilde \mu_0=f_0g_0Leb,\;
\widetilde \mu_1=f_1g_1Leb.
\end{align*}
Then the SBP \eqref{Sch-bri-pro} with $\hbar=1$ is equivalent to the following  minimal action problem, i.e.,
\begin{align}\label{schro1}
&\inf\{H(P|R): P_0=\widetilde \mu_0, P_1=\widetilde \mu_1\}- H(\mu_0| Leb)\\\nonumber
&=\inf\Big\{\int_{0}^1\int_{\mathbb R^d}\frac {|v_t|^2}2 \mu_t(dx)dt: 
(\partial_t-\frac \Delta 2)\mu+\nabla\cdot(v \mu)=0,
 \\\nonumber
&\qquad P_0= \mu_0, P_1= \mu_1
\Big\} 
\end{align}
We denote $\rho_t$ the density of $\mu_t$ with respect to the Lebesgue  measure. 
In addition, with the assumption that $\mu_0,\mu_1$ have finite second moments, the critical point of the minimal action problem satisfies the following system 
\begin{align*}
&(\partial_t-\frac \Delta 2)\rho+\nabla\cdot(\nabla \phi \rho)=0,\; \rho(0)=\rho_0,\\
&(\partial_t+\frac \Delta 2)\phi+\frac 12 |\nabla \phi |^2=0, \; \phi(1)=\log(g_1)
\end{align*}
 with $v_t = \nabla\phi_t$. There is also a backward version of this PDE system, namely
\begin{align*}
&(-\partial_t-\frac \Delta 2)\rho+\nabla\cdot(\nabla \psi \rho)=0,\; \rho(1)=\rho_1,\\
&(-\partial_t+\frac \Delta 2)\psi+\frac 12 |\nabla \psi|^2=0, \; \psi(0)=\log(f_0).
\end{align*}
Here we have the relation $\nabla \psi_t+\nabla \phi_t=\nabla \log(\rho_t)$. 

Applying the transformation 
\begin{equation} 
  S_t=\phi_t-\frac 12 \log(\rho_t)  \label{Hopf-cole transform}
\end{equation}
as being done in \cite{Nelson19661079}, we arrive at the Hamiltonian system on the density space, 
\begin{align*} 
\frac {\partial }{\partial t}\rho+\nabla \cdot(\rho(t,x)\nabla S)&=0,\\\nonumber 
\frac {\partial }{\partial t}S+\frac 12|\nabla S|^2- \frac {1}8\frac {\delta }{\delta \rho_t}I(\rho_t)&=0.
\end{align*}
The corresponding Hamiltonian is $\mathcal H(\rho,S)=\frac 12 \int_{\mathbb R^d}|\nabla S|^2\rho dx -\frac {1}8 I(\rho)$ where $I(\rho)=\int_{\mathbb R^d}|\nabla \log(\rho)|^2\rho dx$ is the Fisher information.
Meanwhile, the action minimizing  problem \eqref{schro1} can be rewritten as 
\begin{align}\label{schro0}
&\inf_{v_t}\Big\{\int_0^1\mathbb E[\frac 12 v(t,X(t))^2]+\frac {1} 8 I (\rho(t))dt +\frac {1}2 \int (\rho^1\log(\rho^1)-\rho^0\log(\rho^0))dx \\\nonumber
&\quad | \; dX_t=v(t,X_t)dt, \;  X(0)\sim \rho^0,\; X(1)\sim \rho^1 \Big\}.
\end{align}
Here $\rho(t)$ is the density of the marginal distribution of $X_t$.

Next, by introducing the conjugate Madelung transformation $f=\sqrt{\rho}e^{S}, g=\sqrt{\rho}e^{-S}$ ( also known as ``Hopf-Cole'' transformation), $f$ and $g$ satisfy so-called ``Schr\"{o}dinger system" (see e.g. \cite{CLZ20, conforti2017extremal, Blaquire1992ControllabilityOA}),
\begin{align}
(\partial_t-\frac \Delta 2) g=0, \; g(0)=g_0,\label{heat} \\
(\partial_t+\frac \Delta 2) f=0, \; f(1)=f_1.\nonumber
\end{align}
This also implies the following relationships
\begin{align*}
\phi=\log(f)=S+\frac 12\log(\rho),
\psi=\log(g)=-S +\frac 12\log(\rho).
\end{align*}

\subsection{Discrete Dynamical formula of SBP}
Based on the relative entropy from the SBP,
\begin{equation*}
  \min_{P} H(P|R)
\end{equation*}
with $P$  the path measure of the random process with transition rate matrix $\{\widehat{m}^t_{ij}\}$ and $R$ the reference measure with transition matrix $\{{m}^t_{ij}\}$. We prove that
\begin{equation*}
  H(P|R)= H(\rho(\cdot,0) | \widetilde {\rho}(\cdot,0))+\int_0^1 \sum_{i\in V} \rho(i,t)  \sum_{j\in N(i)} \left(\frac{\widehat {m}^t_{ij}}{m_{ij}^t}\log\left(\frac{\widehat {m}^t_{ij}}{m_{ij}^t}\right) -\frac{\widehat {m}^t_{ij}}{m_{ij}^t} + 1\right)m_{ij}^t.  
\end{equation*} 
This result is introduced in \cite{girsanov1960transforming}. It can be treated as the discrete version of Girsanov Theorem on finite graph. Since we are not able to find a direct proof of this result in the related references, we provide a proof to this result as follows:

\begin{proof}
Let us consider the reference Markov process with path measure $R$ on graph $G$ with continuity equation
\begin{equation*}
  \frac{\partial \widetilde {\rho}(i, t)}{\partial t} = \sum_{j\in N(i)} {m}_{ji}\widetilde {\rho}(j,t) - {m}_{ij}\widetilde {\rho}(i,t), \quad   i\in V.
\end{equation*}
We aim to find a Markov process with path measure $P$ on $G$ with continuity equation
\begin{equation*}
  \frac{\partial\rho(i,t)}{\partial t} = \sum_{j\in N(i)}\widehat m_{ji}\rho(j,t)-\widehat m_{ij}\rho(i,t), \quad i\in V,
\end{equation*}
as a solution process of SBP.
We approximate the relative entropy between $P$ and $R$:
\begin{equation*}
   H(P|R) = \int \log\left(\frac{P}{R}\right)P ~ d\gamma
\end{equation*}
by discretizing the time interval $[0,1]$ into $N$ equal small intervals with length equals to $h=\frac{1}{N}$. 
Then we also use the a time discrete Markov process to approximate a time-continuous Markov process. 
For reference process $R$, the transition matrix is approximated by 
\begin{equation*}
 {\pi}_{ij}=\begin{cases}
  {m}_{ij}h \quad \textrm{if}~ j\neq i,   \\
       1-h\left(\sum_{l\in N(i)} {m}_{ij}\right) \quad \textrm{if} ~ j=i.
  \end{cases}
\end{equation*}
Similarly we can define an approximation of transition rate matrix denoted by $\{\widehat \pi_{ij}\}$ for $P$.

Now for a specific (time-discrete) path $\gamma = \{v_0,...,v_N\}$ on the graph, let us compute $P(\gamma)$ and $R(\gamma)$, where 
\begin{equation*}
  R(\gamma) = \widetilde {\rho}(v_0,0){\pi}_{v_0,v_1}...\pi_{v_{N-1}v_N},   \quad P(\gamma) = \rho(v_0,0)\widehat \pi_{v_0,v_1}...\widehat \pi_{v_{N-1}v_N}
\end{equation*}
Then we have
\begin{equation*}
  \log\left(\frac{P(\gamma)}{R(\gamma)}\right) = \log\left(\frac{\rho(v_0,0)}{\widetilde {\rho}(v_0,0)}\right)+\sum_{k=0}^{N-1}\log\left(\frac{\widehat \pi_{v_{k}v_{k+1}}}{{\pi}_{v_kv_{k+1}}}\right).
\end{equation*}
Therefore, the relative entropy is
\begin{align*}
    & H(P|R)  = \sum_{\gamma, \textrm{path on G with length} ~ N+1}  \log\left(\frac{P(\gamma)}{R(\gamma)}\right)P(\gamma) \\
  = & \sum_{\{v_0,...,v_N\}} \left(\log\left(\frac{\rho(v_0,0)}{\widetilde {\rho}(v_0,0)}\right)+\sum_{k=0}^{N-1}\log\left(\frac{\widehat \pi_{v_{k}v_{k+1}}}{{\pi}_{v_kv_{k+1}}}\right)\right) \rho(v_0,0)\widehat \pi_{v_0,v_1}...\widehat \pi_{v_{N-1}v_N} \\
  = & \sum_{v_0} \log\left(\frac{\rho(v_0,0)}{\widetilde {\rho}(v_0,0)}\right) \rho(v_0,0) \underbrace{\left(\sum_{\{v_1,...,v_N\} }\widehat \pi_{v_0v_1}...\widehat \pi_{v_{N-1}v_N}\right)}_{\text{=1}} \\
 + & \sum_{k=0}^{N-1} \underline{\sum_{v_k\in V}  ~  \sum_{v_{k+1}\in N(v_k)\bigcup\{v_k\}}\log\left(\frac{\widehat \pi_{v_kv_{k+1}}}{{\pi}_{v_kv_{k+1}}}\right)\underbrace{\left(\sum_{\{v_0,...,v_k\}} \rho(v_0,0)\widehat \pi_{v_0v_1}...\widehat \pi_{v_{k-1}v_k} \right)}_{\text{=$\rho(v_k,t_k)$}}}\\
 &\underline{ \times \widehat \pi_{v_kv_{k+1}} \underbrace{\left(\sum_{\{v_{k+1},...,v_N\}}\widehat \pi_{v_{k+1}v_{k+2}}...\widehat \pi_{v_{N-1}v_N}\right).}_{\text{=$1$}}} 
\end{align*}

Now we take out the underlined part, it can be simplified as following:
\begin{align*}
  & ~ \sum_{v_k\in V}~  \sum_{v_{k+1}\in N(v_k)\bigcup\{v_k\}} \log\left(\frac{\widehat \pi_{v_kv_{k+1}}}{{\pi}_{v_kv_{k+1}}}\right) \rho(v_k, t_k)\widehat \pi_{v_kv_{k+1}} \\
  & = \sum_{v_k\in V} \rho(v_k,t_k)\left(\sum_{v_{k+1}\in N(v_k)}\log\left(\frac{\widehat \pi_{v_kv_{k+1}}}{{\pi}_{v_kv_{k+1}}}\right)\widehat \pi_{v_kv_{k+1}} + \log \left(\frac{\widehat \pi_{v_kv_k}}{{\pi}_{v_kv_k}}\right)\widehat \pi_{v_kv_k} \right). 
\end{align*}
Notice that 
\begin{equation}
  \frac{\widehat \pi_{v_kv_{k+1}}}{{\pi}_{v_kv_{k+1}}} = \frac{h \widehat m_{v_kv_{k+1}}}{h {m}_{v_kv_{k+1}}} = \frac{\widehat m_{v_kv_{k+1}}}{{m}_{v_kv_{k+1}}}, \label{eq1}
\end{equation}
and 
\begin{equation*}
  \frac{\widehat \pi_{v_kv_k}}{{\pi}_{v_kv_k}} = \frac{1-h\left(\sum_{u\in N(v_k)} \widehat m_{v_k u}\right)}{1-h\left(\sum_{u\in N(v_k)} {m}_{v_k u}\right)}=1+h\left(\sum_{u\in N(v_k)}{m}_{v_k u} - \sum_{u\in N(v_k)} \widehat m_{v_k u}\right) + O(h^2).
\end{equation*}
As result, 
\begin{equation}
  \log \frac{\widehat \pi_{v_kv_k}}{{\pi}_{v_kv_k}} = h\left(\sum_{u\in N(v_k)}{m}_{v_k u} - \sum_{u\in N(v_k)} \widehat m_{v_k u}\right) + O(h^2). \label{eq2}
\end{equation}
Now plugging \eqref{eq1} and \eqref{eq2} into our previous computations, we get:
\begin{align} \nonumber 
  & \sum_{v_k\in V} \rho(v_k,t_k)\Bigg(\sum_{v_{k+1}\in N(v_k)}\log\left(\frac{\widehat m_{v_kv_{k+1}}}{m_{v_kv_{k+1}}}\right)\cdot \underbrace{h \widehat m_{v_kv_{k+1}}}_{\text{$\pi_{v_kv_{k+1}}$}} \\\nonumber 
  &\quad + \Bigg(h\left(\sum_{u\in N(v_k)}{m}_{v_k u} - \sum_{u\in N(v_k)} \widehat m_{v_k u}\right)+ O(h^2)\Bigg)\cdot\underbrace{(1-O(h))}_{\text{$\pi_{v_kv_k}$}} \Bigg)  \nonumber \\
 = &  h\left( \sum_{v_k\in V} \rho(v_k,t_k)\left( \sum_{u\in N(v_k)} \log\left(\frac{\widehat m_{v_ku}}{{m}_{v_ku}} \right) \widehat m_{v_ku}+{m}_{v_k u}  - \widehat m_{v_ku} \right) \right) + O(h^2) \nonumber \\ \nonumber 
 =& h\left( \sum_{v_k\in V} \rho(v_k,t_k)  \sum_{u\in N(v_k)} \left(\log\left(\frac{\widehat m_{v_ku}}{{m}_{v_ku}}\right) + \frac{ {m}_{v_k u}}{\widehat m_{v_ku}}  - 1\right) \widehat m_{v_ku}  \right) + O(h^2). \label{equ3}
\end{align}
It follows that
\begin{align*}
  &H(P|R)\\
  &= \sum_{v_0} \log\left(\frac{\rho(v_0,0)}{\widetilde {\rho}(v_0,0)}\right) \rho(v_0,0) + \sum_{k=0}^{N-1}  h\left( \sum_{v_k\in V} \rho(v_k,t_k)  \sum_{u\in N(v_k)} \left(\log\left(\frac{J_{v_ku}}{\widehat{J}_{v_ku}}\right) + \frac{\widehat{J}_{v_k u}}{J_{v_ku}}  - 1\right) J_{v_ku}  \right) + O(h)  \\
  &=H(\rho(\cdot,0) |\widehat{\rho}(\cdot,0))+\sum_{k=0}^{N-1}  h\left( \sum_{i\in V} \rho(i,t_k)  \sum_{j\in N(i)} \left(\log\left(\frac{\widehat m_{ij}}{{m}_{ij}}\right) + \frac{{m}_{ij}}{\widehat m_{ij}}  - 1\right) \widehat m_{ij}  \right) + O(h).  
\end{align*}
Finally, let us sent $N\rightarrow +\infty$ and thus $h\rightarrow 0$, the relative entropy between $P$ and $R$ equals to:
\begin{equation*}
  H(P|R) = H(\rho(\cdot,0)|\widehat {\rho}(\cdot,0)) + \int_{0}^{1}\sum_{i\in V} \rho(i,t_k)  \sum_{j\in N(i)} \left(\log\left(\frac{\widehat m_{ij}}{ {m}_{ij}}\right) + \frac{{m}_{ij}}{\widehat m_{ij}}  - 1\right) \widehat m_{ij} ~dt. 
\end{equation*}

\end{proof}

\bibliographystyle{plain}
\bibliography{bib}
\end{document}